\DeclareMathOperator{\id}{id}
\DeclareMathOperator*{\colim}{colim}
\newtheorem{theorem}[subsection]{Theorem}
\newtheorem*{maintheorem}{Main Theorem}
\newtheorem{lemma}[subsection]{Lemma}
\newtheorem{prop}[subsection]{Proposition}
\newtheorem{cor}[subsection]{Corollary}
\newtheorem{conj}[subsection]{Conjecture}
\theoremstyle{definition}
\newtheorem{defi}[subsection]{Definition}
\newtheorem{terminology}[subsection]{Terminology}
\newtheorem{notation}[subsection]{Notation}
\newtheorem{assumption}[subsection]{Assumption}
\theoremstyle{remark}
\newtheorem{remark}[subsection]{Remark}
\newtheorem*{example}{Example}
\newcommand{\groundring}{R}
\newcommand{\compositionproduct}{\mathbin{\mkern-1mu\triangleleft}}
\newcommand{\FF}{\mathbb{F}}
\newcommand{\QQ}{\mathbb{Q}}
\newcommand{\ZZ}{\mathbb{Z}}
\geoffroyhomepage\url{geoffroy.horel.org}
\gabrielhomepage\url{drummondcole.com/gabriel/academic/}
\newcommand{\mailurl}[1]{\email{\href{mailto:#1}{#1}}}
\title{Homotopy transfer and formality}
\author{Gabriel C. Drummond-Cole}
\address{Center for Geometry and Physics\\ Institute for Basic Science (IBS)\\\newline
77 Cheongam-ro, Nam-gu, Pohang-si, Gyeongsangbuk-do 37673\\
Republic of Korea}
\urladdr{\gabrielhomepage} 
\thanks{Drummond-Cole was supported by IBS-R003-D1. Horel was supported by ANR-18-CE40-0017 PerGAMo, funded by Agence Nationale  pour  la  Recherche}
\author{Geoffroy Horel}
\address{Institut Galil\'ee\\
Universit\'e Sorbonne Paris Nord\\
\newline 
99 avenue Jean-Baptiste Cl\'ement, 93430 Villetaneuse, France}
\address{D\'epartement de math\'ematiques et applications, \'Ecole normale sup\'erieure,
\newline 
45 rue d'Ulm, 75230 Paris Cedex 05, France}
\urladdr{\geoffroyhomepage}
\begin{document}
\begin{abstract}
In~\cite{CiriciHorel:MHSFSMF,CiriciHorel:ECPFTC}, the second author and Joana Cirici proved a theorem that says that given appropriate hypotheses, $n$-formality of a differential graded algebraic structure is equivalent to the existence of a chain-level lift of a homology-level degree twisting automorphism using a unit of multiplicative order at least $n$. 

Here we give another proof of this result of independent interest and under slightly different hypotheses. We use the homotopy transfer theorem and an explicit inductive procedure in order to kill the higher operations. As an application of our result, we prove formality with coefficients in the $p$-adic integers of certain dg-algebras coming from hyperplane and toric arrangements and configuration spaces.
\end{abstract}

\maketitle

\section*{Introduction}
An algebraic structure $A$ (e.g. an associative algebra, a commutative algebra, an operad, etc.) in the category of chain complexes is said to be formal if it is connected to its homology $H(A)$ by a zig-zag of quasi-isomorphisms that preserve the algebraic structure. 
There are many interesting examples from a variety of arenas, including rational homotopy theory~\cite{Sullivan:ICT,FelixHalperinThomas:RHT}, Kontsevich formality~\cite{Kontsevich:DQPM,Tamarkin:FCOLD}, K\"ahler manifolds~\cite{DeligneGriffithsMorganSullivan:RHTKM}, string topology of complex projective spaces with integral coefficients~\cite[Theorem 4.3]{berglundborjeson:KAAFLH}, etc.

Let us generically use the term algebra to refer to any type of algebraic structure such as those mentioned above; we assume that the structure operations are all degree zero. For any algebra $A$, and any unit $\alpha$ of the base ring, one can construct an automorphism $\sigma_{\alpha}$ of $H(A)$ that is given in homological degree $n$ by multiplication by $\alpha^n$. By a standard homotopical algebra reasoning, if $A$ happens to be formal, this automorphism can be lifted to an endomorphism of $A$ (or at least an endomorphism of a cofibrant replacement of $A$). 

It was observed in the introduction of~\cite{DeligneGriffithsMorganSullivan:RHTKM} (see also the last section of \cite{Petersen:GTFLD}) that the converse should be true if $\alpha$ is of infinite order. The intuition is the following : if such an endomorphism exists at the level of chains, then any higher Massey product has to be compatible with this action but then we see that they have to be zero because they intertwine multiplication by $\alpha^n$ with multiplication by $\alpha^m$ with $n\neq m$. As stated, this is not rigorous : for example, without some coherence assumption, the vanishing of the classical Massey products of a differential graded algebra is not a sufficient condition to ensure formality. 
In fact Deligne--Griffiths--Morgan--Sullivan proved their result using a different method. 
Sullivan proved a statement of this kind for differential graded algebras in characteristic zero~\cite[Theorem 12.7]{Sullivan:ICT} which was generalized to other algebraic structures by Guill\'en Santos--Navarro--Pascual--Roig~\cite[Theorem 5.2.4]{GuillenSantosNavarroPascualRoig:MSFO}. 
The method in both cases uses a chain level filtration and does not explicitly pass through Massey products. 

One of our goals in this paper is to give a method for proving formality by making more precise the intuition explained in the previous paragraph. We use the fact that the Massey products are the shadow of the $P_\infty$-structure on the homology of the algebra. 
Unlike the Massey products, the $P_\infty$-structure is not uniquely defined but it is unique in a suitable homotopical manner. 
Moreover, it contains all the homotopical information of the algebra. Our main result is the following :
\begin{maintheorem}
\label{theorem: main}
Let $R$ be a commutative ring. Let $P$ be an operad in the category of $R$-modules. Let $(A,d)$ be a $P$-algebra in the category of differential graded $\groundring$-modules such that the chain complex $(H(A,d),0)$ can be written as a homotopy retract of $(A,d)$. 
Let $\alpha$ be a unit in $\groundring$ and let $\hat{\sigma}$ be an endomorphism of the $P$-algebra $(A,d)$ such that the induced map on $H(A,d)$ is the degree twisting by $\alpha$ (see Definition~\ref{defi: degree twisting}). 
\begin{itemize}
\item If $\alpha^k-1$ is a unit of $\groundring$ for $k\leq n$, then $(A,d)$ is $n$-formal as a $P$-algebra.
\item If $\alpha^k-1$ is a unit of $\groundring$ for all $k$, then $(A,d)$ is formal as a $P$-algebra.
\end{itemize}
\end{maintheorem}

The assumption that $(H(A,d),0)$ can be written as a homotopy retract of $(A,d)$ allows us to apply the homotopy transfer theorem (Theorem \ref{theorem:HPL}). It is automatic if $\groundring$ is a field or more generally if $A$ and $H(A)$ are degreewise projective and $\groundring$ is a hereditary ring (a ring is hereditary if a submodule of a projective module is projective). Let us mention that Dedekind rings (in particular principal ideal domains) are hereditary.
These conditions can be weakened a little further. See Remark~\ref{remark: generalization of homotopy retract}.

The main improvement over the kind of classical results of~\cite{Sullivan:ICT} and the improvement of~\cite{GuillenSantosNavarroPascualRoig:MSFO} is the extension to give $n$-formality results outside characteristic zero fields. 
The method of proof in both references relies on a filtration that fails as soon as $\alpha$ does not have infinite order.

One reason this result is interesting is because the theory of \'etale cohomology gives algebras with automorphisms of this type. Let us explain this with a simple example. We take $A$ to be the algebra $C^*(\mathbf{P}^n_{\mathbb{C}},\mathbb{Q}_\ell)$ of singular cochains on the complex projective space with $\mathbb{Q}_\ell$-coefficients. Standard results of \'etale cohomology imply that this algebra is quasi-isomorphic to the algebra $B=C^*_{et}(\mathbf{P}^n_{\overline{\mathbb{Q}}},\mathbb{Q}_\ell)$ of \'etale cochains of the projective space over the algebraic closure of $\mathbb{Q}$. Since the projective space is actually defined over $\mathbb{Q}$, the algebra $B$ has an action of the absolute Galois group of $\mathbb{Q}$. If we fix a prime $p$ different from $\ell$, we can pick a lift $\sigma$ of the Frobenius of $\overline{\mathbb{F}}_p$ in the absolute Galois group of $\mathbb{Q}$ and it can be shown that $\sigma$ acts on $H^{2k}(\mathbf{P}^n_{\mathbb{C}},\mathbb{Q}_\ell)$ by multiplication by $p^k$. We are thus exactly in the situation of the theorem above and we deduce
that $B$ is formal, and thus that $A$ is as well.

We could apply the same strategy with the algebra of cochains of complex projective space with $\mathbb{Z}_\ell$-coefficients. In that case, the Frobenius lift still acts in degree $2k$ by multiplication by $p^{k}$. However, contrary to what happens in characteristic zero, $p^{\ell-1}-1$ is not a unit in $\mathbb{Z}_\ell$. The above theorem lets us conclude that $C^*_{et}(\mathbf{P}^n_{\overline{\mathbb{Q}}},\mathbb{Z}_\ell)$ is $(\ell-2)$-formal (it is in fact $(2\ell-4)$-formal by the variant \ref{theo: variant}).

A similar suite of results was proven in the papers~\cite{CiriciHorel:MHSFSMF} and~\cite{CiriciHorel:ECPFTC} by Cirici and the second author. There the method used was different and relied on deep results in abstract homotopy theory, most notably, Hinich's recent result comparing Lurie's $\infty$-categorical approach to algebras over an operad with the model categorical approach (see~\cite{Hinich:RAM}). 
Here the methods used are comparatively easier and more explicit. We can in fact write an inductive formula for a formality quasi-isomorphism. 
Moreover, we are able to improve one of the results of \cite{CiriciHorel:ECPFTC} by removing a simple connectivity hypothesis and allowing the coefficient ring to be more general than a field.
From this we obtain a result of partial formality for complements of hyperplane arrangements and toric arrangements with coefficients in the $p$-adic integers that we believe is new. Let us mention however, that not all of the results of~\cite{CiriciHorel:ECPFTC} can be recovered from the methods of our paper. Most notably, the result of $(p-2)$-formality of the little disks operad with coefficients in $\mathbb{F}_p$ proved in~\cite[Theorem 6.7]{CiriciHorel:ECPFTC} is not a consequence of our main theorem.

\subsection*{Structure of the paper}
In section~\ref{sec: reminder} we review mostly standard conventions, definitions, and facts about operadic homotopy algebra. We briefly review operads and cooperads, algebras and coalgebras, coderivations, homotopy algebras, and homological perturbation.
The only things that are non-standard are the following:
\begin{enumerate}
	\item we use the symbol $\compositionproduct$ instead of $\circ$ for the composition product of $\mathbb{S}$-modules and $\mathbb{N}$-modules, and
	\item we use the terminology \emph{component} of a homotopy algebra in a non-standard way---see Terminology~\ref{terminology: components} and Remark~\ref{remark: warning}.
	\item we define \emph{$n$-formality} in Definition~\ref{defi: n-formality} and connect it to formality in Propositions~\ref{prop: chains n-formality implies formality} and~\ref{prop: cochains n-formality implies formality}.
\end{enumerate}

Then Section~\ref{section: induction} constitutes the proof of the main theorem. 
The method is to construct a sequence of isomorphisms of homotopy algebras which witness the coherent vanishing of successively more and more of the higher structure operations.

Section~\ref{section: variant theorem} presents a simple algebraic variant of the main theorem. Both this variant and the main theorem are applied in 
Section~\ref{sec: applications} to yield the following examples.
\begin{enumerate}
	\item \label{item: complex algebraic varieties} Formality of complex algebraic varieties whose mixed Hodge structure is pure of some weight,
	\item \label{item : little disks} Formality of the little disks operad,
	\item \label{item: hyperplane arrangements} Formality of complements of hyperplane arrangements,
	\item \label{item: toric arrangements} Formality of complements of toric arrangements and
	 
	\item \label{item: configurations} Coformality of the space of configurations of points in Euclidean space.
\end{enumerate}
To the best of our knowledge, the results obtained with integral coefficients in examples~\eqref{item: hyperplane arrangements},~\eqref{item: toric arrangements} and~\eqref{item: configurations} are new.
\subsection*{Conventions}
Fix a commutative ground ring $\groundring$. 
All tensor products are taken over $\groundring$ unless otherwise specified.
When working with symmetric operads we insist that all prime numbers are invertible in $\groundring$ (i.e. $\groundring$ is a $\mathbb{Q}$-algebra). The symmetric group on the set $\{1,\ldots, n\}$ is denoted $\mathbb{S}_n$.

\begin{defi}
\label{defi: degree twisting}
Let $V$ be a graded $\groundring$-module.
Let $\alpha$ be a unit in $\groundring$.
The \emph{degree twisting} by $\alpha$, denoted $\sigma_\alpha$, is the linear automorphism of $V$ which acts on the degree $n$ homogeneous component of $V$ via multiplication by $\alpha^n$.
\end{defi}

\section{Reminder and conventions on operadic homotopy algebra}
\label{sec: reminder}
\subsection{Operads}
A \emph{$\mathbb{N}$-module} is a collection $\{P(n)\}$ indexed by $n\ge 0$ of chain complexes over $\groundring$. 
A \emph{$\mathbb{S}$-module} $P$ is a collection $\{P(n)\}$ indexed by $n\ge 0$ of right ($\groundring$-linear, differential graded) $\mathbb{S}_n$-representations.
The index $n$ is called the \emph{arity}.
When we are working with $\mathbb{S}$-modules, we insist that our ground ring is a field of characteristic zero (so that we can, e.g., identify invariants and coinvariants of symmetric group actions).

Maps of $\mathbb{N}$-modules (respectively $\mathbb{S}$-modules) are collections of (equivariant) chain maps.
There are monoidal products on $\mathbb{N}$-modules and $\mathbb{S}$-modules defined as follows (these products are often denoted $\circ$ in the literature; we avoid this because of the potential for confusion):
\begin{gather*}
(P\compositionproduct Q)(n) 
=
\bigoplus_{k=0}^\infty 
\bigoplus_{n_1+\cdots+n_k=n}P(k)\otimes Q(n_1)\otimes\cdots\otimes Q(n_k).
\\
\end{gather*}
\begin{multline*}
(P\compositionproduct Q)(n)
=\\
\bigoplus_{k=0}^\infty 
P(k)\otimes_{\groundring[\mathbb{S}_k]}\left(\bigoplus_{n_1+\cdots+n_k=n}Q(n_1)\otimes\cdots\otimes Q(n_k)\otimes_{\groundring[\mathbb{S}_{n_1}\times \cdots\times \mathbb{S}_{n_k}]}\groundring[\mathbb{S}_n]\right).
\end{multline*}
The unit $I$ has a rank one free $\groundring$-module in $I(1)$ and the zero representation elsewhere.
We suppress the associator isomorphisms throughout.

An \emph{operad} is a monoid in the monoidal category of $\mathbb{S}$-modules. 
A \emph{cooperad} is a comonoid in this monoidal category. 
A \emph{non-symmetric operad} (respectively \emph{cooperad}) is a monoid (\emph{comonoid}) in the monoidal category of $\mathbb{N}$-modules. The unit $I$ with respect to $\compositionproduct$ is thus both a cooperad and an operad. We refer to it as the trivial cooperad and the trivial operad.
A \emph{coaugmentation} of a (possibly non-symmetric) cooperad $C$ is a cooperad map from the trivial cooperad to $C$; an augmentation of a (possibly non-symmetric) operad $P$ is an operad map from $P$ to the trivial operad.
A \emph{weight-grading} on a cooperad or operad is an extra $\mathbb{N}$-grading on the underlying $\mathbb{N}$-module or $\mathbb{S}$-module which is stable under the monoid or comonoid structure maps. A weight-graded cooperad or operad is \emph{connected} if the unit or counit is an isomorphism on weight $0$.
A connected weight-graded operad has a unique weight-respecting augmentation. 
A connected weight-graded cooperad has a unique weight-respecting coaugmentation. 
A \emph{reduced} $\mathbb{N}$-module or $\mathbb{S}$-module has $0$ in arity $0$.

We will always work with the categories of \emph{reduced connected weight-graded} operads and \emph{reduced connected weight-graded} cooperads, equipped with their unique weight-respecting (co)augmentations. 
We will henceforth suppress these adjectives in our terminology.

\begin{remark}
\label{remark: generalizations}
Essentially everything we do works equally well in the symmetric and non-symmetric case, so we use the same symbol for both products; the reader should interpret it as appropriate for the context.

The ``reduced'' assumption (starting our indexing at $1$ and not $0$) is a common technical restriction to make certain sums finite. This assumption is probably extraneous in the context of a connected weight-grading but various technical results on which we rely are stated in the reduced category and it would require careful verification that no subtle problems arise.

The connected weight-grading assumption is only used in Proposition~\ref{proposition: formal implies classically formal, flat NS or char 0 field} and Theorem~\ref{theorem:HPL}. 
This assumption would not be necessary in any situation in which the conclusions of these two results were known by other means.
For example, model-categorical methods have often been used to give access to more powerful but non-constructive existence statements in operad theory. 
So one kind of context that might work would use model-theoretical tools to give these two statements, probably combining a model category of operads and a model category of algebras over (sufficiently nice) operads. 
This would require that the interface with the classical theory be well-developed enough to give statements about the specific concrete model for homotopy algebras in use in these two theorems.

In any event, the examples that arise most commonly are reduced and support a connected weight-grading.

It may also be possible to weaken the characteristic zero assumption for $\mathbb{S}$-modules. 
This would be much more interesting in the sense that algebras over symmetric operads arise often in characteristic $p$. 
However, the requisite changes appear to be much more substantial and it is not entirely clear that everything would work.
\end{remark}

\subsection{Algebras and coalgebras}
\label{subsec: algebras and coalgebras}
There is a fully faithful functor from the category of chain complexes over $\groundring$ to the category of either $\mathbb{N}$-modules or $\mathbb{S}$-modules which takes a chain complex $V$ to the object with $V$ in arity $0$ and the zero complex in all other arities.
In a slight abuse, we will write $V$ for the image of $V$ under this functor as well, hoping it causes little confusion.
For such an object $V$ we write $Q(V)$ as shorthand for the $\mathbb{N}$-module or $\mathbb{S}$-module $Q\compositionproduct V$. 
This is also necessarily concentrated in arity zero.

An \emph{algebra} over the operad $P$ is a left $P$-module whose underlying $\mathbb{S}$-module is concentrated in arity $0$. 
That is, it is a chain complex $V$ equipped with a map $P(V)\to V$ which satisfies the usual associativity and unitality constraints.
Similarly, a \emph{coalgebra} over the cooperad $C$ is a left $C$-comodule whose underlying $\mathbb{S}$-module is concentrated in arity $0$. That is, it is a chain complex $V$ equipped with a map $V\to C(V)$ which satisfies the usual coassociativity and counitality constraints. 
We do not require any compatibility with the weight-grading in either case.

Fix a cooperad $C$ with structure map $\Delta$ and counit $\epsilon$. 
The cofree conilpotent $C$-coalgebra on $V$ is the coalgebra $C(V)$ with structure map induced by $\Delta$. 
By abuse, we typically also use the notation $\Delta$ for this structure map 
\[
C(V)\xrightarrow{\Delta}(C\compositionproduct C)(V)\cong C(C(V)).
\] 
There will be several further times when we abuse notation like this, using the symbol of a map $f$ for the map obtained by taking the monoidal product of $f$ with some identity map or other.

By the universal property of being cofree, given a chain map $f:C(V)\to W$, there is a unique extension to a map $F$ of $C$-coalgebras $C(V)\to C(W)$. 
Explicitly, $F$ is given by the composite
\[
C(V)\xrightarrow{\Delta} C(C(V))\xrightarrow{f}C(W)
\]
where $\Delta$ is the comonoidal structure map of $C$.

\subsection{Coderivations}
\label{subsec: coderivation}
Given a chain map $m:C(V)\to V$, there is another useful extension of $m$, this time not as a coalgebra map but as a coderivation.
To discuss this, we first recall the linearization of the monoidal product $\compositionproduct$ (see, e.g.,~\cite[6.1]{LodayVallette:AO} for more details). 
We describe the non-symmetric case for ease of notation.

The linearization of $\compositionproduct$ is easiest to describe in terms of a trifunctor on $\mathbb{N}$-modules. 
Given three $\mathbb{N}$-modules $P$, $Q$, and $R$, the product $P\compositionproduct(Q;R)$ consists of the $R$-linear summands of $P\compositionproduct (Q\oplus R)$, i.e., 
\begin{align*}
(P\compositionproduct(Q;R))(n)&=\bigoplus_{k=1}^\infty\bigoplus_{i=1}^k\bigoplus_{n_1+\cdots+n_k=n} P(k)\otimes Q(n_1)\otimes \cdots\otimes R(n_i)\otimes \cdots\otimes Q(n_k). 
\end{align*}
Again we abbreviate $P\compositionproduct (Q; R)$ as $P(Q;R)$ if both $Q$ and $R$ are in the image of the inclusion from chain complexes to $\mathbb{N}$-modules.
It doesn't make sense to ask about ``associativity'' but this trifunctor satisfies the following compatibility relations:
\begin{align}
\label{eq:non-associative compatibility of trifunctor 1}
(P\compositionproduct Q)\compositionproduct (R;S) &\cong P\compositionproduct(Q\compositionproduct R; Q\compositionproduct(R;S))
\\
\label{eq:non-associative compatibility of trifunctor 2}
(P\compositionproduct (Q;R))\compositionproduct S &\cong P\compositionproduct(Q\compositionproduct S;R\compositionproduct S).
\end{align}

Now we can define the linearization of $\compositionproduct$ as $P\compositionproduct_{(1)}R=P\compositionproduct(I;R)$. 
Explicitly we have
\begin{align*}
(P\compositionproduct_{(1)}R)(n)&=
(P\compositionproduct(I;R))(n)
\\
&=\bigoplus_{k=1}^\infty\bigoplus_{i=1}^k P(k)\otimes I(1)\otimes\cdots\otimes R(n-k+1)\otimes \cdots\otimes I(1) 
\\
&\cong\bigoplus_{k=1}^{n+1}\bigoplus_{i=1}^k  P(k)\otimes R(n-k+1).
\end{align*}
The linearization is not associative but rather preLie in general. We will not need the preLie compatibility but rather two other relations which follow from Equations~\eqref{eq:non-associative compatibility of trifunctor 1} and~\eqref{eq:non-associative compatibility of trifunctor 2}: 
\begin{align}
\label{eq:non-associative compatibility of infinitesimal composition 1}
(P\compositionproduct Q)\compositionproduct_{(1)} R 
&\cong P\compositionproduct(Q;Q\compositionproduct_{(1)}R)
\\
\label{eq:non-associative compatibility of infinitesimal composition 2}
(P\compositionproduct_{(1)} Q)\compositionproduct R &\cong P\compositionproduct(R;Q\compositionproduct R).
\end{align}
These isomorphisms and the associator isomorphism for $\compositionproduct$ together satisfy the appropriate analogues of the pentagon relation, and so we may safely suppress them, assuming a unique natural isomorphism between any two parenthesizations that are equivalent by a chain of (modified) such associators.

The linearization $P\compositionproduct_{(1)}R$ is a direct summand of $P\compositionproduct R$ with complement given in terms of similar formulas with either zero or more than one entry from $R$.
A map $O\to Q\oplus R$ induces a map $P\compositionproduct O\to P\compositionproduct (Q;R)$, and likewise a map $Q\oplus R\to O$ induces a map $P\compositionproduct(Q;R)\to P\compositionproduct O$.

Now returning to our cooperad $C$, there is a linearized coproduct from the map $C\xrightarrow{\epsilon,\id_C}I\oplus C$ as follows:
\[
C\xrightarrow{\Delta}C\compositionproduct C\xrightarrow{} C\compositionproduct_{(1)} C
\]
which we denote $\Delta_{(1)}$.

Now given a differential graded $C$-coalgebra $X$, a linear map $M:X\to X$ of homological degree $-1$ is a \emph{coderivation} of $X$ if the composition along the top and right side of the following diagram is equal to the sum of the composition along the left side and the two choices on the bottom:
\[
\begin{tikzcd}
X\dar[swap]{\Delta}\ar[rrrr,"M"]
&&&&
X\dar{\Delta}
\\
C(X)\rar\ar[rrrr,swap, bend right,"d_C(X)"]
&C(X;X)
\ar[rr,swap,"C(X;M)"]
&&
C(X;X)
\rar
&
C(X)
\end{tikzcd}
\]
where the unmarked arrows are induced by the diagonal $X\to X\oplus X$ and the fold map $X\oplus X\to X$. 
A priori, this definition does not use the weight-grading of the cooperad $C$. 

Given a chain complex $V$, the coderivations of the free $C$-coalgebra $C(V)$ are in bijection with the homological degree $-1$ linear maps $C(V)\to V$. 
Given a coderivation $M$, one gets a map $m:C(V)\to V$ by projecting to the cogenerators:
\[
C(V)\xrightarrow{M} C(V)\to V.
\]
In the other direction, given a map $m:C(V)\to V$, one obtains a linear map $C(V)\to C(V)$ via adding $d_C(V)$ to the composition
\[
C(V)\xrightarrow{\Delta_{(1)}}(C\compositionproduct_{(1)}C)(V)\cong C(V;C(V))\xrightarrow{m} C(V;V)\to C(V).
\]
Here the isomorphism is via Equation~\eqref{eq:non-associative compatibility of infinitesimal composition 1} and the unmarked arrow is induced by the fold map of $V$.
It is a tedious diagram chase to verify that this indeed gives a coderivation. 

There is a similar linearization $\compositionproduct_{(1)}$ in the symmetric case which we will not write down explicitly and all of the statements in this section work symmetrically with the minimal requisite changes.

\subsection{Homotopy algebras}

One convenient method for describing homotopy algebras in the operadic formalism is via conilpotent cooperads and the cobar functor. For our purposes here, conilpotence can essentially remain a black box, but we briefly outline the definition with references for the interested reader.

Given a coaugmentation of a cooperad $C$, one can define a \emph{coradical filtration} of $C$ of the form
\[I=F_0C\subset F_1C\subset \cdots \subset C\]
where the object $F_1C$ is the coradical of $C$ with respect to the coaugmentation. 
There are different definitions of $F_nC$ for $n>1$ in the literature (see, e.g.,~\cite[\S~5.8.5]{LodayVallette:AO} or~\cite[Definition~4.3]{LegrignouLejay:HTLC}) which try to capture the idea that $F_nC$ admits at most $n-1$ ``interesting'' applications of the structure map $\Delta$ before becoming merely formal extensions by $I$. 
Then one says that the coaugmented cooperad $C$ is conilpotent (called \emph{local conilpotent} in~\cite{LegrignouLejay:HTLC}) if the natural map 
\[\colim_i F_i C \to C\] 
is an isomorphism.
Despite the different filtrations in use in the definitions, a cooperad is conilpotent in the sense of~\cite{LodayVallette:AO} if and only if it is locally conilpotent in the sense of~\cite{LegrignouLejay:HTLC}, and for parsimony we will use ``conilpotent'' to refer to the cooperads satisfying these equivalent conditions.

The \emph{coaugmentation coideal} $\overline{C}$ of a coaugmented cooperad is the linear cokernel of the coaugmentation $I\to C$. 
Then the \emph{cobar functor} $\Omega$ from conilpotent cooperads to operads takes the cooperad $C$ to the free operad on $\bar{C}[1]$, equipped with 
\begin{enumerate} \item a differential that combines the internal differential of $C$ and the comonoid structure of $C$ and 
\item the induced weight-grading.
\end{enumerate} See, e.g.,~\cite[\S~6.5.2]{LodayVallette:AO}. 
This works equally well for nonsymmetric cooperads and operads. 
A \emph{twisting morphism} from a conilpotent cooperad $C$ to an operad $P$ is a morphism of operads from $\Omega(C)$ to $P$; the twisting morphism is \emph{Koszul} if it induces an isomorphism on homology groups. Since $\Omega(C)$ is a free operad, a twisting morphism is entirely determined by its restriction to $\overline{C}[1]$. So we could alternatively define, as in \cite[\S 6.4]{LodayVallette:AO}, a twisting morphism as a degree $-1$ map $C\to P$ satisfying certain equations. By definition our twisting morphisms must intertwine the weight-grading of $C$ and $P$.

\begin{assumption}
\label{assumption: setup}
Let $P$ be an operad over $\groundring$, concentrated in degree zero. Let $C$ be a conilpotent cooperad over $\groundring$, with coaugmentation coideal $\bar{C}$ concentrated in strictly positive degree. Let $\kappa:\Omega(C)\to P$ be a Koszul twisting morphism.
\end{assumption}

Given Assumption~\ref{assumption: setup}, one model for the category of \emph{strongly homotopy $P$-algebras}, or \emph{$P_\infty$-algebras} is the category of cofree conilpotent $C$-coalgebras.
\begin{defi}
\label{defi:infty morphism}
Assume Assumption~\ref{assumption: setup}.
A \emph{$P_\infty$-algebra} structure $M$ on a chain complex $(V,d)$ is a degree $-1$ square-zero coderivation of the cofree conilpotent coalgebra $C(V)$ so that the composition
\[
V\xrightarrow{\text{coaugmentation}} C(V)\xrightarrow{M} C(V)\xrightarrow{\text{projection}} V
\]
is the differential $d$.

A $P_\infty$-algebra structure on a graded $R$-module $V$ is a $P_\infty$-algebra structure on $(V,d)$ for some differential $d$.

A \emph{$P_\infty$-morphism} between $(V,d,M)$ and $(V',d',M')$ is a map $F$ of differential graded $C$-coalgebras from $(C(V),M)$ to $(C(V'),M')$, i.e., a map of coalgebras so that the following diagram commutes:
\[
\begin{tikzcd}
C(V)\rar{F}\dar{M} & C(V')\dar{M'}\\
C(V)\rar{F} & C(V').
\end{tikzcd}
\]
A $P_\infty$ morphism is a \emph{quasi-isomorphism} if the composition 
\[
V\xrightarrow{\text{coaugmentation}} C(V)\xrightarrow{F} C(V')\xrightarrow{\text{projection}} V'
\]
is a quasi-isomorphism of chain complexes.
\end{defi}
\begin{remark}
\label{remark: connection between P and P-infinity algebras}
As given here, this definition does not match the terminology for an algebra over an operad from Section~\ref{subsec: algebras and coalgebras}.

There is an alternate characterization of $P_\infty$-algebras that brings the two usages into closer but not perfect alignment. Namely, by~\cite[Theorem 10.1.13]{LodayVallette:AO}, a $P_\infty$-algebra structure on $(V,d)$ is equivalent to an $(\Omega C)$-algebra structure on $(V,d)$ in our earlier sense. 
Every $(\Omega C)$-algebra morphism is a $P_\infty$-algebra morphism in this new sense~\cite[Proposition~10.2.5]{LodayVallette:AO}, but the converse is not true in general.

From this alternate perspective, any $P$-algebra structure on $(V,d)$ can be pulled back along the Koszul twisting morphism $\kappa:\Omega C\to P$ to a $P_\infty$-algebra structure on $(V,d)$. 
Similarly, any $P$-algebra morphism $(V,d,m)\to (V',d',m')$ pulls back along $\kappa$ to a $P_\infty$ morphism.
This pullback constitutes a functor from the category of $P$-algebras to the category of $P_\infty$-algebras. 
If $\kappa$ is surjective, this functor is in fact the inclusion of a (non-full) subcategory.

This perspective also makes it easier to see that quasi-isomorphisms are closed under composition.
\end{remark}

By the discussion of the last section, specifying the coderivation $M$ is equivalent to specifying a degree $-1$ linear map
\[
C(V)\xrightarrow{m} V
\]
so that $V\to C(V)\to V$ is $d$ (the map $m$ must satisfy further conditions equivalent to the equation $M^2=0$).

We will consistently pass back and forth between these two representations of the same data in this article by using a capital letter for the coderivation and the corresponding lowercase letter for the projection to the cogenerators.
E.g., $(V,M)$ and $(V,m)$ are both notation for the same $P_\infty$-algebra structure on $V$ and $M:C(V)\to C(V)$ is the $C$-coalgebra coderivation extending $m:C(V)\to V$.

Similarly, we encode a $P_\infty$-morphism from $V$ to $W$ (leaving the structures implicit) via a linear map
$C(V)\to W$ satisfying relations which imply that the corresponding coalgebra morphism intertwines the coderivations.
Again, we use capital letters for the coalgebra maps and lower case letters for the projections to the cogenerators.

\begin{terminology}
\label{terminology: components}
Given a map (say, $m$ or $f$) from $C(V)$ to $W$, we use a subscript to indicate the further decomposition with respect to the homological degree of $C$, and call the resulting maps \emph{components}. Explicitly, we can decompose $C(V)$ as
\[C(V)=\bigoplus_{i\in\mathbb{N}}C_i(V)\]
where $C_i(V)$ is defined as
\[
C_i(V)=\bigoplus_{k\in\mathbb{N}}C_i(k)\otimes_{\mathbb{S}_k} V^{\otimes k}
\qquad{}\text{or}\qquad
C_i(V)=\bigoplus_{k\in\mathbb{N}}C_i(k)\otimes V^{\otimes k}
\]
respectively in the symmetric or non-symmetric contexts. 
We denote by $f_i$ or $m_i$, the restriction of $f$ or $m$ to $C_i(V)$.
\end{terminology} 
So for instance $m_i:C_i(V)\to V$ is the \emph{$i$th component} of the $P_\infty$-algebra $m$ and $f_i:C_i(V)\to W$ is the \emph{$i$th component} of a $P_\infty$-morphism $f$.
\begin{remark}[Warning]
\label{remark: warning}
This does not in general coincide with other uses of the term \emph{component} in operadic algebra. 
Often the $i$th component would be the component in \emph{arity} $i$ in $C$.
A different usage common in the literature would have the $i$th component refer to the component in \emph{weight-grading} $i$ in $C$.
For us it is neither of these but the component in \emph{homological degree} $i$ in $C$. 

In any case, typically the desirable properties for components are that they be $\mathbb{N}$-indexed and that the $0$th component be split by the coaugmentation and counit, which occurs for us by Assumption~\ref{assumption: setup}. 

In the case of weight-graded algebras, the arguments given in this paper should work with more general filtrations mutatis mutandis (Definition~\ref{defi: degree twisting} would have to be changed to be about the weight rather than the homological degree).
\end{remark}
\begin{remark}
Note that our usage does not coincide naively with the homological degree of the operations. 
Since $M$ is supposed to be of degree $-1$, the operations of form $m_i$ have degree $i-1$.
For maps $F$ (of degree zero) the $f_i$ operations indeed have degree $i$.
\end{remark}

\begin{remark}
Given $P$ concentrated in degree zero, the image $BP$ of $P$ under the bar functor satisfies the conditions of Assumption~\ref{assumption: setup}: it is conilpotent, the coaugmentation coideal is concentrated in strictly positive degrees, and it comes with a canonical Koszul twisting morphism to $P$. 
So if we don't care about the details of the particular choice of model for the category of strongly homotopy $P$-algebras, we need only begin with an operad over $\groundring$ concentrated in degree zero.

However, many practitioners have preferred models, especially in specific cases. 
A classical example where there is a different preferred model arises in the context of a so-called \emph{Koszul operad}, which is equipped with a quadratic presentation which yields a Koszul twisting morphism $\Omega C\to P$ from a cooperad $C$ without an internal differential (see~\cite[7.4, especially Thm.~7.4.2(4)]{LodayVallette:AO}, although this case goes back to~\cite{GinzburgKapranov:KDO}).
Then the model which arises for the category of strongly homotopy $P$-algebras is the so-called \emph{minimal model}~\cite[6.3.4, Corollary 7.4.3]{LodayVallette:AO}.

Since sometimes such a preference exists, we have explicitly recorded the required conditions on the governing cooperad $C$ and its relation to $P$.
\end{remark}

\begin{example}
The classical examples (the ``three graces'') and other Koszul operads are all examples. In particular, the following examples work.
\begin{enumerate}
	\item Let $P$ be the associative operad and $C$ the shifted coassociative cooperad. Then $P_\infty$-algebras are $A_\infty$-algebras with the standard definitions and the example fits into this framework.
	\item Let $P$ be the Lie operad and $C$ the shifted cocommutative cooperad. Then $P_\infty$-algebras are $L_\infty$-algebras with the standard definitions and the example fits into this framework.
	\item Let $P$ be the commutative operad and $C$ the shifted coLie cooperad. Then $P_\infty$-algebras are $C_\infty$-algebras with the standard definitions and the example fits into this framework.
	\item All of these are subsumed by the following. Let $P$ be a Koszul operad concentrated in degree zero. Then $P_\infty$-algebras with the standard definitions fit into this framework.
\end{enumerate}
\end{example}
\begin{example}
Everything we will do works with only the evident requisite changes for colored operads.
So another example that works is for $P$ the colored operad whose algebras are non-unital non-symmetric Markl operads. 
The operad $P$ is concentrated in degree zero so it fits into this framework. The operad $P$ is also Koszul with the requisite changes for color sets so there is an explicit small model for homotopy operads as $P_\infty$-algebras~\cite{VanderLaan:CKDSHO}. 
\end{example}

We will use the following elementary observation about a situation relating a map and its linearization.
\begin{lemma}
\label{lemma: technical compatibility between f and linearization when components vanish.}
Let $C$ be a cooperad and $V$ a chain complex, and suppose given a map $f:C(V)\to V$. Suppose further that $f_0=\id_V$ and $f_i=0$ for $1\le i<n$.
Then the projection
\[
C_N(V)\xrightarrow{\Delta}C(C(V))\xrightarrow{f}C(V)\to C_{N-n}(V)
\]
and the projection
\[
C_N(V)\xrightarrow{\Delta_{(1)}}C(V; C(V))\xrightarrow{f_n}C(V;V)\to C_{N-n}(V)
\]
coincide for $n>0$.
\end{lemma}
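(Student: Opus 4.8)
The plan is to unwind both composites into explicit sums indexed by the grading on $C(V)$ and to match them term by term, using the hypotheses $f_0=\id_V$ and $f_1=\cdots=f_{n-1}=0$ to discard all but one family of terms. Recall (Terminology~\ref{terminology: components}) that $C(V)=\bigoplus_N C_N(V)$, that this grading is additive under $\compositionproduct$ and preserved by $\Delta$, that $C_0(V)=V$, and that the unlabeled ``$f$'' in the first composite is $\id_C\compositionproduct f$, which on a summand $C(k)\otimes (C(V))^{\otimes k}$ of $C(C(V))$ applies $f$ to each of the $k$ copies of $C(V)$. Recall also that $\Delta_{(1)}$ is obtained from $\Delta$ by applying the counit $\epsilon$ to all but one of the inner cooperad slots, so that in $(C\compositionproduct_{(1)}C)(V)\cong C(V;C(V))$ the distinguished slot carries a copy of $C(V)$ while all other entries are copies of $V$.

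First I would analyze the first composite. Since $\Delta$ is grading-homogeneous, it carries $C_N(V)$ into the sum of those summands $C(k)\otimes C_{n_1}(V)\otimes\cdots\otimes C_{n_k}(V)$ of $C(C(V))$ for which the grading of the outer $C(k)$ plus $n_1+\cdots+n_k$ equals $N$. Applying $\id_C\compositionproduct f$ sends each $C_{n_j}(V)$ to $V=C_0(V)$ via the component $f_{n_j}$, landing in the summand $C(k)\otimes V^{\otimes k}$, whose grading is exactly the grading of the outer $C(k)$. For this to survive the final projection to $C_{N-n}(V)$ we need that grading to be $N-n$, hence $n_1+\cdots+n_k=n$; and we need each $f_{n_j}\neq 0$, which by hypothesis forces every $n_j$ to lie in $\{0\}\cup\{n,n+1,\dots\}$. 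A sum of nonnegative integers drawn from this set and equal to $n>0$ must consist of a single term $n$ (hit by $f_n$) with all others zero (hit by $f_0=\id_V$). This family of surviving terms is exactly the second composite: keeping a single distinguished inner slot and replacing the others by their counits is $\Delta_{(1)}$, the identity on the remaining $V$-factors is the fold map, $f_n$ is applied to the grading-$n$ part of the distinguished slot, and then the outer $C$ automatically has grading $N-n$, so the projection to $C_{N-n}(V)$ is the identity there. Therefore the two composites coincide.

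The only real work is bookkeeping. One must pin down the exact indexing of the components $f_i$ from Terminology~\ref{terminology: components} (in particular that $f_0=\id_V$ records $C_0(V)=V$ and that $f_i$ acts on the grading-$i$ part), track the non-associative infinitesimal-composition isomorphisms of Equations~\eqref{eq:non-associative compatibility of infinitesimal composition 1} and~\eqref{eq:non-associative compatibility of infinitesimal composition 2} through the identification of ``one distinguished inner slot of $\Delta$'' with ``$\Delta_{(1)}$'', and, in the symmetric case, carry the $\mathbb{S}_k$- and $\mathbb{S}_{n_j}$-(co)invariants along. None of this is conceptually difficult; it is the kind of diagram chase already invoked elsewhere in this section, and the only inputs beyond formal manipulation are the vanishing hypotheses on $f$ together with additivity of the grading under $\compositionproduct$.
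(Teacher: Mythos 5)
Your argument is correct and is essentially the paper's own proof: both decompose $\Delta$ according to the grading, use the vanishing hypotheses to see that only summands with one inner index equal to $n$ and the rest equal to $0$ survive the projection to $C_{N-n}(V)$, and identify the sum of those surviving terms with the linearized coproduct $\Delta_{(1)}$ followed by $f_n$. Your write-up is if anything slightly more careful than the paper's in explaining why indices strictly between $0$ and $n$, or summing to more than $n$, are excluded.
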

\begin{proof}
Applying $\Delta$ lands in a sum over partitions of $n=i_1+\cdots + i_k$ into summands represented by tensor products of the form
\[
C_{N-n}(k)\otimes C_{i_1}\otimes\cdots \otimes C_{i_k}
\]
and $f$ vanishes on all summands except those with a single index of value $n$ and all other indices of value $0$. 
The sum of such terms is then the projection of $C\compositionproduct C$ to $C\compositionproduct(C_0;C_n)$.
\end{proof}

\subsection{Homological perturbation and formality}
Given a differential graded $P$-algebra $(A,d,m)$, there is an induced $P$-algebra structure on the homology $H(A,d)$, because the operations making up $m$ are all chain maps.
However, in general the induced structure $(H(A,d),0,m_*)$ is not equivalent to the original $P$-algebra $(A,d,m)$.
\begin{defi}
We call the differential graded $P$-algebra $(A,d,m)$ \emph{classically formal} if it is equivalent to the induced structure $(H(A,d),0,m_*)$, i.e., if there exists a differential graded $P$-algebra $(\hat{A},\hat{d},\hat{m})$ and a zig-zag of maps of differential graded $P$-algebras inducing isomorphisms on homology:
\[
(A,d,m)\xleftarrow{\sim} (\hat{A},\hat{d},\hat{m})\xrightarrow{\sim} (H(A,d),0,m_*).
\]
\end{defi}
Another way to say this is that $(A,d,m)$ and $(H(A,d),0,m_*)$ are isomorphic objects in the homotopy category of differential graded $P$-algebras. 
This notion goes back to~\cite{DeligneGriffithsMorganSullivan:RHTKM}. Let us point out that, usually, such an algebra is simply called formal. We have decided to change the terminology in this paper because we would like to operate under a slightly different definition of formality.

\begin{defi}
We call the differential graded $P$-algebra $(A,d,m)$ \emph{formal} if there is a $P_\infty$-quasi-isomorphism from $(A,d,m)$ to $(H(A,d),0,m_*)$.
\end{defi}

In the nicest cases, the two notions of formality coincide. Here are two precise statements along these lines.
\begin{prop}
\label{proposition: classically formal implies formal, field}
Let $\groundring$ be a field. A classically formal differential graded $P$-algebra is formal.
\end{prop}
This is more or less well-known, we include a proof later as a corollary of Theorem~\ref{theorem:HPL}.
\begin{prop}
\label{proposition: formal implies classically formal, flat NS or char 0 field}
Let $\groundring$ be a characteristic zero field. Then a formal differential graded $P$-algebra is classically formal.

Let $\groundring$ be any commutative ring. Let $P$ and $C$ be non-symmetric, and aritywise and degreewise flat. Then a formal differential graded $P$-algebra is classically formal.
\end{prop}
\begin{proof}
The first statement can be found in~\cite[(M1),(M3)]{Markl:HAHA} and in \cite[Theorem 11.4.9]{LodayVallette:AO}.

We were not able to find a reference for the second statement so we give a few details. 

We first recall the bar and cobar construction associated to the twisting morphism $\kappa$, denoted $B_\kappa$ and $\Omega_\kappa$. The functor $B_\kappa$ takes as an argument a $P$-algebra and returns a $C$-coalgebra, the functor $\Omega_\kappa$ is its left adjoint. The functor $B_\kappa$ can be extended to $P_\infty$-algebras. In that case, it is denoted $B_\iota$ as it is the Bar construction associated to a canonical twisting morphism $\iota$ (see \cite[\S 11.4]{LodayVallette:AO}) which in our presentation corresponds to the identity map $\Omega C\to \Omega C$. Note that if we view $P_\infty$-algebras as a subcategory of differential graded $C$-coalgebras, the functor $B_\iota$ is simply the inclusion functor. 

Now, we sketch the proof of the theorem. Let $A=(V,d,m)$ be a $P$-algebra and $H=(H(A,d),0,m_*)$ its homology with the induced $P$-algebra structure.

The $C$-coalgebras $B_\iota(A)$ and $B_\iota(H)$ are simply $B_\kappa(A)$ and $B_\kappa(H)$ since $A$ and $H$ are strict $P$-algebras. Therefore by \cite[Theorem 11.3.3]{LodayVallette:AO}, the counit of the adjunction $(\Omega_\kappa,B_\kappa)$ induces quasi-isomorphisms of $P$-algebras $\Omega_\kappa B_\kappa(A)\to A$ and $\Omega_\kappa B_\kappa(H)\to H$.

By assumption, we have a $P_\infty$-morphism $f:A\to H$ inducing an isomorphism in homology. By \cite[Proposition 11.4.7]{LodayVallette:AO}, this implies that the map of $P$-algebras
\[
\Omega_\kappa B_\kappa(A)\cong\Omega_\kappa B_\iota(A)\xrightarrow{\Omega_\kappa B_\iota(f)} \Omega_\kappa B_\iota (H)
\cong \Omega_\kappa B_\kappa(H)
\]
is a quasi-isomorphism.
Combining these three maps yields a zig-zag of quasi-isomorphisms of $P$-algebras
\[A\xleftarrow{\simeq}\Omega_\kappa B_\kappa(A)\xrightarrow{\simeq}\Omega_\kappa B_\kappa(H)\xrightarrow{\simeq} H.\]

We invite the skeptical reader to check that the only reason the ground ring is assumed to be a field of characteristic zero in this context in~\cite{LodayVallette:AO} is so that the operadic K\"unneth formula \cite[Proposition 6.2.3]{LodayVallette:AO} holds. 
Since we assume that $P$ and $C$ are flat, we have a version of this proposition in the non-symmetric case. 
We should be a little careful because a priori we could have terms like $H(A^{\otimes n},d_{A^{\otimes n}})$ on the right side of the formula which would necessitate flatness of $A$ and/or $H$ to continue.
But the only such terms that arise in the proof of Theorem 11.3.3 have $n=1$, so further assumptions are unnecessary.
\end{proof}

Even in the absence of formality and working over a general ground ring, as long as $(A,d)$ and $(H(A,d),0)$ are homotopy equivalent chain complexes, there is always a way to compress the data of $(A,d,m)$ to the homology, via the so-called homological perturbation lemma or transfer theorem. 
Perturbation methods are a classical tool in algebraic topology. 
They were first used for $A_\infty$-algebras by Kadeishvili~\cite{Kadeishvili:OTHFS}. 
For algebras over more general operads, see~\cite{Markl:HAHA,LodayVallette:AO,Berglund:HPTAO}. 

\begin{theorem}[Transfer theorem]
\label{theorem:HPL}
Let $(A,d,m)$ be a differential graded $P$-algebra such that the chain complex $(H(A,d),0)$ can be written as a homotopy retract of $(A,d)$. Then there exist
\begin{enumerate}
\item a transferred $P_\infty$-algebra structure $m^t$ with zero differential on $H(A,d)$ extending the induced $P$-algebra structure on the homology and
\item quasi-inverse $P_\infty$ quasi-isomorphisms between the $P_\infty$-algebras $(A,d,m)$ and $(H(A,d),0,m^t)$ extending a given homotopy retraction between $(A,d)$ and $(H(A,d),0)$.
\end{enumerate}
\end{theorem}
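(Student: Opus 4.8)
Write $H$ for $H_*(A,d)$. The plan is to transport the whole situation into the cofree conilpotent $C$-coalgebra model for $P_\infty$-algebras, run the homological perturbation lemma there, and then check that its output is genuinely operadic. First I would record the homotopy retraction data as chain maps $p\colon(A,d)\to(H,0)$ and $i\colon(H,0)\to(A,d)$ with $pi=\id_H$, together with a degree $1$ map $h\colon A\to A$ satisfying $\id_A-ip=dh+hd$; after the standard adjustment I may assume the side conditions $h^2=0$, $ph=0$, $hi=0$. For each arity $k$ the tensor power $(A^{\otimes k},d)$ deformation retracts onto $(H^{\otimes k},0)$ via the tensor-trick homotopy $\sum_{j=1}^{k}(ip)^{\otimes(j-1)}\otimes h\otimes\id^{\otimes(k-j)}$ (symmetrized when $C$ is a symmetric cooperad, which is harmless since $\groundring$ is then a characteristic zero field); tensoring with $C(k)$ and summing over $k$ assembles these into a deformation retract of chain complexes
\[
(C(A),C(d))\;\rightleftarrows\;(C(H),0)
\]
whose structure maps are the strict $C$-coalgebra morphisms $C(i)$ and $C(p)$ and whose contracting homotopy $\mathfrak h$ is a coderivation homotopy still satisfying the three side conditions.

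Next I would identify the perturbation. Via the Koszul twisting morphism $\kappa\colon C\to P$, the differential graded $P$-algebra $(A,d,m)$ is in particular a $P_\infty$-algebra, encoded by a square-zero coderivation $M$ of $C(A)$ whose corestriction to the cogenerators has zeroth component $d$; write $M=C(d)+\delta$, so $\delta$ gathers the components $m_i$ with $i\ge 1$. The hypotheses on $C$ --- conilpotent, with $\bar C$ concentrated in strictly positive degrees --- ensure that $\mathfrak h\delta$ is pointwise nilpotent on $C(A)$, so that $\sum_{n\ge 0}(\mathfrak h\delta)^n$ converges. Feeding the deformation retract above and the perturbation $\delta$ into the homological perturbation lemma produces a new deformation retract
\[
(C(A),C(d)+\delta)\;\rightleftarrows\;\bigl(C(H),\delta^t\bigr),\qquad \delta^t=C(p)\Bigl(\textstyle\sum_{n\ge 0}(\delta\mathfrak h)^n\Bigr)\delta\,C(i),
\]
with transferred structure maps $I^\infty=\bigl(\sum_{n\ge 0}(\mathfrak h\delta)^n\bigr)C(i)$ and $P^\infty=C(p)\bigl(\sum_{n\ge 0}(\delta\mathfrak h)^n\bigr)$ and a transferred homotopy $\mathfrak h^\infty$. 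Since $C(d)+\delta=M$ squares to zero, so does $\delta^t$, and I would take $m^t$ to be its corestriction to $H$.

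The step I expect to be the main obstacle is checking that this output is coalgebraic: that $\delta^t$ is a coderivation of $C(H)$ and that $I^\infty$, $P^\infty$ are $C$-coalgebra morphisms, so that together they form a transferred $P_\infty$-structure $m^t$ on $H$ and a pair of $P_\infty$-morphisms. The perturbation lemma \emph{a priori} yields only a contraction of chain complexes, but the side conditions, combined with the facts that $C(i)$, $C(p)$ are strict coalgebra maps and $\mathfrak h$ is a coderivation homotopy, force the transferred data to respect the coalgebra structure: one proves by induction on the number of occurrences of $\delta$ (equivalently, along the filtration controlling the nilpotence above) that each partial sum $C(p)(\delta\mathfrak h)^n$ and $(\mathfrak h\delta)^nC(i)$ is a coalgebra map and $\mathfrak h(\delta\mathfrak h)^n$ the corresponding homotopy, so the coalgebra identities survive passage to the limit. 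The bookkeeping is cleanest using that a coderivation, resp.\ a morphism, of a cofree conilpotent $C$-coalgebra is determined by its corestriction to the cogenerators, which turns every identity into one between maps into $H$. Alternatively one can sidestep this by writing $m^t$, $I^\infty$, $P^\infty$ as explicit sums over decorated trees --- leaves labeled by $i$, internal vertices by the $\kappa$-twisted components of $m$, internal edges by $h$, root by $p$ --- which are operadic by construction, the $P_\infty$-relations then reducing to a tree-combinatorial identity fueled by $dh+hd=\id-ip$. Either way the verification is standard; see \cite{Markl:HAHA,LodayVallette:AO,Berglund:HPTAO}.

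Finally I would read the remaining assertions off the filtration bookkeeping. Every summand of $I^\infty$ and $P^\infty$ beyond the leading one contains a factor of $\mathfrak h$ and strictly changes the filtration, so their linear components are $i$ and $p$; hence both are $P_\infty$-quasi-isomorphisms. The homological perturbation lemma for special deformation retracts returns a special deformation retract, so $P^\infty I^\infty=\id$ and $I^\infty P^\infty$ is homotopic to the identity through $\mathfrak h^\infty$, making them quasi-inverse, and the linear component of $\mathfrak h^\infty$ is $h$, so the transferred contraction extends the given homotopy retraction. Lastly, the $n=0$ summand of $\delta^t$ is $C(p)\,\delta\,C(i)$, whose corestriction recovers precisely the operations induced by $m$ on homology; thus $m^t$ extends the induced $P$-algebra structure $(H,0,m_*)$, as required.
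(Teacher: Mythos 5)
The paper does not prove this theorem at all: it is quoted as a known result, with the proof deferred to the cited references (Kadeishvili, Markl, Loday--Vallette, Berglund), and the only argument supplied in the text concerns when the hypothesis of a homotopy retraction is satisfied. Your proposal is essentially the standard tensor-trick-plus-perturbation-lemma proof found in those references, and it is sound; the one step you rightly flag as delicate (that the transferred differential is a coderivation and the transferred maps are coalgebra morphisms, which holds for the full sums rather than for the individual partial terms) is exactly the content supplied by the cited sources.
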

Here quasi-inverse means that the composition in both directions induces the identity morphism on homology.

Note that such a homotopy retraction always exists over a field but this also holds if both $A$ and $H(A,d)$ are degreewise projective and the base ring $\groundring$ is hereditary. Indeed if this is the case, we have an epimorphism $Z_n(A)\to H_n(A)$ from the group of $n$-cycles to the $n$-th homology group. This epimorphism splits since $H_n(A)$ is projective. We can thus write $Z_n(A)$ as the direct sum $H_n(A)\oplus B_n(A)$. Moreover, we can identify $A_n/Z_n(A)$ with $B_{n-1}(A)$ which is projective. Therefore the epimorphism $A_n\to B_{n-1}(A)$ induced by the differential also splits and we have a splitting $A_n\cong B_n(A)\oplus H_n(A)\oplus B_{n-1}(A)$. The construction of the homotopy retraction then works exactly as in the case of fields.
\begin{proof}[Proof of Proposition~\ref{proposition: classically formal implies formal, field}]
It suffices to show that given a quasi-isomorphism $f$ of $P$-algebras from $(\hat{A},\hat{d},\hat{m})$ to $(A,d,m)$, there is a $P_\infty$-quasi-isomorphism in the other direction $(A,d,m)\to (\hat{A},\hat{d},\hat{m})$.

Because $\groundring$ is a field, the hypotheses of Theorem~\ref{theorem:HPL} apply to both the domain and codomain of $f$.
Precomposing and postcomposing $f$ with the guaranteed $P_\infty$-quasi-isomorphisms yields a $P_\infty$-quasi-isomorphism from $(H(\hat{A},\hat{d}),0,\hat{m}^t)$ to $(H(A,d),0,m^t)$.
Because the differentials on both sides of this map vanish, this $P_\infty$-quasi-isomorphism is an isomorphism of $P_\infty$ algebras and admits an inverse (see~\cite[Theorem~10.4.1]{LodayVallette:AO}). 
The inverse can be precomposed and postcomposed with the guaranteed $P_\infty$-quasi-isomorphisms to finally yield a $P_\infty$ quasi-isomorphism from $(A,d,m)$ to $(\hat{A},\hat{d},\hat{m})$.
\end{proof}

\begin{remark}
The version of Theorem~\ref{theorem:HPL} stated in~\cite{Berglund:HPTAO}, which works for arbitrary rings makes the assumption that $P(1)\cong I$. In the subcategory of operads and cooperads with $0$ in arity $0$ and $I$ in arity $1$, there there is a canonical connected weight-grading on every operad and cooperad by one less than the arity, and Berglund's methods apply mutatis mutandis to the more general connected weight-graded case.
\end{remark}

Until the end of this section, we let $(A,d,m)$ be a differential graded $P$-algebra such that the chain complex $(H(A,d),0)$ can be written as a homotopy retract of $(A,d)$ so that the transfer theorem applies. 

\begin{cor}
If $(H(A,d),0,m^t)$ and $(H(A,d),0,m_*)$ are isomorphic as $P_\infty$-algebras, then the algebra $(A,d,m)$ is formal.
\end{cor}

\begin{defi}
\label{defi: n-formality}
Let $n$ be a positive integer, we say that a differential graded $P$-algebra $(A,d,m)$ is \emph{$n$-formal} if $(H(A,d),0,m^t)$ is isomorphic to a $P_\infty$-algebra $(H(A,d),0,m)$ with $m_i=0$ for $i$ in the range $2\leq i\leq n$.
\end{defi}

\begin{remark}[Warning]
This notion differs from other notions of $n$-formality in the literature for commutative differential graded algebras~\cite[Definition~2.2]{FernandezMunoz:FDS},~\cite[Definition~2.4]{Macinic:CRFPNG}, and~\cite[Introduction]{CiriciHorel:ECPFTC}. 
The definitions in these references use $n$ as a stand-in for geometric dimension, so that $n$-formal means something along the lines of ``formal up to homological degree $n$'' whereas for us an $n$-formal commutative differential graded algebra is ``formal up to arity $n+2$.'' 




\end{remark}

The following two propositions that show that $n$-formality can sometimes imply formality.

\begin{prop}
\label{prop: chains n-formality implies formality}
Let $(A,d,m)$ be a differential graded $P$-algebra. Assume that $H_i(A,d)=0$ for $i$ outside of the interval $[0,n]$. Then, $(A,d,m)$ is $n$-formal if and only if it is formal. 
\end{prop}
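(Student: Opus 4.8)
The strategy is to treat the two implications separately; the reverse implication (``formal $\Rightarrow$ $n$-formal'') is essentially free, and the substantive content is the forward implication, which reduces to a degree count once we have exploited $n$-formality. For the easy direction: if $(A,d,m)$ is formal then, by the corollary above, $(H_*(A,d),0,m^t)$ is $P_\infty$-isomorphic to the induced strict structure $(H_*(A,d),0,m_*)$; a strict $P$-algebra has all of its higher structure operations equal to zero, so it in particular satisfies the vanishing demanded by Definition~\ref{defi: n-formality}, and hence $(A,d,m)$ is $n$-formal.

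For the forward implication, write $H = H_*(A,d)$. Using $n$-formality, I would fix a $P_\infty$-isomorphism $(H,0,m^t)\cong(H,0,m')$ in which the components of $m'$ carrying the structure operations of homological degree $1,\dots,n$ all vanish. The plan is then to show that the grading hypothesis makes the remaining higher components of $m'$ vanish automatically. By Remark~\ref{remark: warning} the component $m'_i$ raises homological degree by $i-1$, and for $i\ge 1$ its inputs are homogeneous elements of $H$, which is concentrated in nonnegative degrees; hence the image of $m'_i$ lies in homological degrees $\ge i-1$. Since $H$ vanishes above degree $n$, this forces $m'_i=0$ whenever $i-1>n$, that is, for every structure operation of homological degree $>n$. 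Combined with the vanishing supplied by $n$-formality, every structure operation of $m'$ of positive homological degree is zero, so $(H,0,m')$ is a \emph{strict} $P$-algebra with zero differential.

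It remains to identify this strict $P$-algebra. The linear part of the fixed $P_\infty$-isomorphism together with the degree-zero (product) component of the morphism relation exhibits $(H,0,m')$ as isomorphic, as a strict $P$-algebra, to $(H,0,m^t_1)$, which by Theorem~\ref{theorem:HPL} is precisely the induced structure $(H,0,m_*)$. Therefore $(H,0,m^t)\cong(H,0,m_*)$ as $P_\infty$-algebras, which is formality of $(A,d,m)$ by the corollary above. The one point requiring genuine care is the bookkeeping in the middle paragraph: one must check that the range of homological degrees (through degree $n$) killed by $n$-formality dovetails exactly with the range (above $n$) forced to vanish by the grading, so that no intermediate structure operation — in particular the one of top relevant homological degree $n$, which could land in $H_n$ — is left unaccounted for. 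This meshing is exactly why the hypothesis is $H_i=0$ for $i\notin[0,n]$ and not something weaker, and it is the step I would scrutinize most closely; the rest is routine.
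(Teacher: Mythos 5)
Your argument is correct and is essentially the paper's proof: the paper disposes of the whole statement with the single observation that the components $m_i$ of any $P_\infty$-structure on $H_*(A,d)$ must vanish for degree reasons once $i$ exceeds the range covered by $n$-formality, which is exactly your middle paragraph. The extra steps you spell out (the easy converse, and identifying the resulting strict structure with $(H_*(A,d),0,m_*)$ via the linear part of the isomorphism) are left implicit in the paper, and your explicit check that the two vanishing ranges dovetail at the boundary index is the one point where you are more careful than the paper's one-line proof, which is in fact off by one there (degree reasons only force $m_i=0$ for $i\ge n+2$, since $m_{n+1}$ applied to degree-zero inputs can land in degree $n$).
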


\begin{proof}
Indeed, the components $m_i$ with $i>n$ of any $P_\infty$-structure on $H(A,d)$ have to be zero for degree reasons.
\end{proof}

There is also a somewhat more involved version for cohomologically graded algebraic structures. 
In order to keep consistent conventions throughout, we phrase it in terms of homologically graded structures concentrated in nonpositive degrees; statements with nonnegative cohomological grading can be obtained by negating indices.
\begin{prop}
\label{prop: cochains n-formality implies formality}
Let $(A,d,m)$ be a differential graded $P$-algebra. Let $j$ be an integer, $n$ and $q$ a positive integers. Suppose that
\begin{enumerate}
\item for all $i$, the component $C_i$ is concentrated in arity at least $i+j$, and 
\item $H_i(A,d)=0$ for $i$ outside $[n-q(n+j+1)+2,-q]$.
\end{enumerate}
Then $(A,d,m)$ is $n$-formal if and only if it is formal.
\end{prop}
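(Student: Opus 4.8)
The plan is to imitate the proof of Proposition~\ref{prop: chains n-formality implies formality}: show that, under hypotheses (1) and (2), every component $m_i$ with $i>n$ of an arbitrary $P_\infty$-structure on $H_*(A,d)$ vanishes, so that an $n$-formal structure collapses to a strict one and is therefore identified with $(H_*(A,d),0,m_*)$. Recall that formality of $(A,d,m)$ means $(H_*(A,d),0,m^t)$ and $(H_*(A,d),0,m_*)$ are isomorphic as $P_\infty$-algebras, and $n$-formality (Definition~\ref{defi: n-formality}) means $(H_*(A,d),0,m^t)$ is isomorphic to some $(H_*(A,d),0,m')$ with $m'_i=0$ for $1\le i\le n$. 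The new feature relative to Proposition~\ref{prop: chains n-formality implies formality} is that $H_*(A,d)$ now occupies too wide a band of homological degrees for a bare degree count to kill the higher components; instead one uses hypothesis (1) to force the relevant parts of the cooperad into high arity, which drags the homological degree of the source of $m_i$ far below the support of $H_*(A,d)$.

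Concretely, fix $i>n$ and consider a summand $C_i(k)\otimes_{\mathbb{S}_k}H_*(A,d)^{\otimes k}$ of the source of $m_i$; by (1) it is zero unless $k\ge i+j$. A homogeneous element of a nonzero summand has homological degree $i+\sum_{\ell=1}^{k} h_\ell$ with each $h_\ell\in[n-q(n+j-1),-q]$ by (2), so its degree is at most $i-kq\le i-(i+j)q$. Since $m_i$ lowers homological degree by $1$, its image is concentrated in degrees at most $i-(i+j)q-1$, and the inequality
\[
i-(i+j)q-1 < n-q(n+j-1)
\]
holds for all $i>n$ (it is automatic when $q=1$ and reduces to $i>n-\tfrac{q+1}{q-1}$ when $q\ge 2$). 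Hence the image of $m_i$ lies below the support of $H_*(A,d)$ and $m_i=0$. Granting this, an $n$-formal structure $(H_*(A,d),0,m')$ has $m'_i=0$ for $1\le i\le n$ by hypothesis and for $i>n$ by the claim, so it is strict, and an isomorphism of $P_\infty$-algebras identifies it with the induced strict structure $(H_*(A,d),0,m_*)$, giving formality. The converse (formal $\Rightarrow$ $n$-formal) is the easy direction and I expect it to fall out formally from the same two reformulations, the vanishing claim guaranteeing consistency.

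I expect the only genuine work to be the bookkeeping of three gradings at once — the homological degree of $C$ (equal to $i$ on $C_i$, not the degree $i-1$ of the associated multilinear operation), the arity (bounded below by $i+j$ via (1)), and the homological degrees of $H_*(A,d)$ — and the verification that the arity bound in (1) is exactly strong enough to make the displayed inequality turn over at $i=n$. The constant $n-q(n+j-1)$ in hypothesis (2) is evidently reverse-engineered for this purpose, so once the inequality is correctly assembled there should be nothing conceptual left; this arithmetic check, rather than any structural difficulty, is where I anticipate the main obstacle.
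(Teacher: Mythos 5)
Your proposal is correct and takes essentially the same approach as the paper: the paper's entire proof is the degree count showing that for $i\ge n+1$ the output of $m_i$ lands in degree below $n-q(n+j-1)$, using hypothesis (1) to force arity at least $i+j$ and hypothesis (2) to bound the input degrees by $-q$. Your bound $i-(i+j)q-1$ is in fact slightly tighter than the paper's stated $i-(i+j)q$ (which drops the $-1$ coming from $m_i$ having degree $i-1$), but both suffice, and your arithmetic reduction to $(i-n)(1-q)<q+1$ matches the paper's computation.
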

The most common applications occur in the case where 
\begin{itemize}
\item The operad $P$ is trivial in arity $0$ and $1$, in which case $C$ can be chosen to satisfy (1) with $j = 2$, and
\item The integer $q$ is specified to be $2$ (the \emph{simply connected} case)
\end{itemize}
in which case the restriction is that $H_i(A,d)=0$ for $i$ outside $[-n-4,-2]$.
\begin{proof}
Applying $m_i$ to elements in degree at most $-q$ yields something in degree at most $i-(i+j)q$.
If $i\ge n+1$ then 
\[
i-(i+j)q =  i(1-q)-jq \le n-nq-jq-q+1
\]
so the output is outside of the support of $H(A,d)$.
\end{proof}

\section{Extending formality inductively}
\label{section: induction}
Throughout this section we will assume given the data $(P,C,\kappa)$ of Assumption~\ref{assumption: setup}.
\subsection{Statement of the key lemma}
The following lemma is the technical core of the argument that we will use.
\begin{lemma}
\label{lemma: key lemma}
Let $V$ be a graded $\groundring$-module and $\sigma_{\alpha}:V\to V$ be the degree twisting morphism by $\alpha$ where $\alpha$ is a unit in $\groundring$ which is such that $\alpha^n-1$ is also a unit. Suppose that $V$ (viewed as a chain complex with trivial differential) is equipped with
\begin{enumerate}
\item a $P_\infty$-algebra structure $C(V)\xrightarrow{m} V$ which vanishes on $C_{i}(V)$ for $i=0$ and $i$ in the range $2\le i< n+1$ and
\item a $P_\infty$-automorphism $s$ of $(V,m)$ which is equal to the automorphism $\sigma_{\alpha}$ on $V\cong C_{0}(V)\to V$ and which vanishes on $C_{i}(V)$ in the range $1\le i< n$.
\end{enumerate}
Then there exist:
\begin{enumerate}
\item a $P_\infty$-algebra structure $m'$ on $V$ which vanishes on $C_{i}(V)$ for $i=0$ and $i$ in the range $2\le i< n+2$,
\item a $P_\infty$-automorphism $s'$ of $(V,m')$ which is equal to the automorphism $\sigma_{\alpha}$ of $V$ on $V\cong C_{0}(V)\to V$ and which vanishes on $C_{i}(V)$ in the range $1\le i<n+1$, and
\item a $P_\infty$-isomorphism $f$ from $(V,m)$ to $(V,m')$ which is equal to $\id_V$ on $V\cong C_{0}(V)\to V$,  vanishes on $C_{i}(V)$ for $i\notin \{0,n\}$, and intertwines the $P_\infty$-automorphisms $s$ and $s'$.
\end{enumerate}
\end{lemma}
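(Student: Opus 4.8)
The plan is to build the new structure $m'$ by a single explicit homotopy-type perturbation that cancels the bottom surviving component $m_{n+1}$, using the $P_\infty$-isomorphism $f$ as the vehicle; then to transport $s$ along $f$ to obtain $s'$ and check that $s'$ has the required vanishing. First I would fix notation: write $M$, $S$, $F$ for the coderivations/coalgebra maps extending $m$, $s$, $f$. Since $m$ vanishes on $C_i(V)$ for $i=0$ and for $2\le i\le n$, the lowest potentially-nonzero higher component is $m_{n+1}$, and the $P_\infty$-relation $M^2=0$ forces (by a standard argument, using that the differential on $V$ is zero and Lemma~\ref{lemma: technical compatibility between f and linearization when components vanish.}) that $m_{n+1}:C_{n+1}(V)\to V$ is a chain map to $(V,0)$ and is in fact ``closed'' for the relevant obstruction complex. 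The goal is to choose $f$ with $f_0=\id_V$, $f_i=0$ for $i\notin\{0,n\}$, and a single nonzero component $f_n:C_n(V)\to V$ so that the transported structure $m'=f_*m$ has $m'_{n+1}=0$. Writing out the $P_\infty$-morphism equation $F\circ M=M'\circ F$ and extracting the component landing in $C_{N-n-1}$-cogenerators (the degree bookkeeping: $m$ and $m'$ have degree $-1$, $f$ has degree $0$), one gets schematically
\[
m'_{n+1}=m_{n+1}+ \big(\text{a term linear in } f_n \text{ built from } \Delta_{(1)}\big)+(\text{lower-order terms that vanish by hypothesis}).
\]
The term linear in $f_n$ is, up to sign, the composite $C_{n+1}(V)\xrightarrow{\Delta_{(1)}}C(V;C(V))\to C_1(V;C_n(V))\oplus C_n(V;C_1(V))\to V$, i.e.\ an expression of the form $d_C$-type differential applied to $f_n$; since $C_1(V)$ contributes via the operadic differential of $C$ only, this is exactly the coboundary of $f_n$ in the deformation complex. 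So cancelling $m'_{n+1}$ amounts to solving a linear equation $\partial f_n = -m_{n+1}$, which in general has no solution — and this is precisely where the hypothesis on $\alpha$ enters, as in the next paragraph.

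The key point — and the step I expect to be the main obstacle — is to produce $f_n$ not as a chain-level primitive (which need not exist) but by an averaging/eigenspace trick using the automorphism $s$. The automorphism $s$ acts on $C_{n+1}(V)$ and on $V$; because $s$ restricts to $\sigma_\alpha$ on cogenerators and $C_i(V)$ is built from $i$ shifted copies of $V$ glued by $C$, the map $m_{n+1}$ intertwines the $s$-action on source and target, and on the source the $\sigma_\alpha$-action scales by a total homological weight that differs from the target weight by exactly the ``extra'' degree carried by $\bar C$ in homological degree $n+1$. Concretely, since $\bar C$ is concentrated in strictly positive degree and the piece $C_{n+1}$ contributes homological degree $n+1$ beyond the $V$-inputs, the source and target $\sigma_\alpha$-weights of $m_{n+1}$ differ by a factor that is a power of $\alpha$ congruent to $\alpha^{\pm n}$ modulo lower terms — the salient fact being that the relevant scalar is $\alpha^{k}$ for some $k\equiv \pm n$ in a way that makes $\alpha^{k}-1$ (hence, after the bookkeeping, a unit multiple of $\alpha^n-1$) invertible. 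I would then set $f_n$ to be the ``correction'' obtained by dividing the naive primitive by this unit: more precisely, run the standard homological-perturbation contraction toward the transferred structure but twist it by the idempotent built from $s$, so that the obstruction $m_{n+1}$, living in the $\alpha$-weight-$n$ generalized eigenspace where $\sigma_\alpha-\id$ is invertible, gets killed by $f_n := (\text{unit})^{-1}\cdot(\text{explicit contraction term})$. The upshot: $f_n$ exists and $m'_{n+1}=0$; moreover $m'$ still vanishes on $C_i(V)$ for $i=0$ and $2\le i\le n$ because $f$ is the identity through order $n-1$ and the new terms only affect component $n+1$ and above.

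Finally I would construct $s'$ and verify (iii). Define $s'$ to be the conjugate of $s$ by $f$, i.e.\ as coalgebra maps $S':=F\circ S\circ F^{-1}$; then by construction $f$ intertwines $s$ and $s'$, and $s'$ is a $P_\infty$-automorphism of $(V,m')$ restricting to $\sigma_\alpha$ on cogenerators. It remains to check $s'_i=0$ for $1\le i\le n$. Through order $n-1$ this is immediate since $f$, $f^{-1}$, and $s$ are all concentrated in the relevant low components only in degree $0$ (where they are $\id$, $\id$, $\sigma_\alpha$). For the new constraint $s'_n=0$: expanding $S'=F\circ S\circ F^{-1}$ and collecting the component to $C_n$-cogenerators gives $s'_n = s_n + (\sigma_\alpha\circ f_n - f_n\circ\bar\sigma) + (\text{vanishing terms})$, where $\bar\sigma$ denotes the induced $\sigma_\alpha$-action on $C_n(V)$; but $s_n=0$ by hypothesis, and — since $f_n$ was chosen inside the $\alpha$-weight eigenspace decomposition and in fact lands in the single eigencomponent matched to the target — the bracketed term is $(\text{unit})\cdot f_n$ minus itself times the same unit, i.e.\ it vanishes, OR, if a residual term survives, one absorbs it by a further rescaling of $f_n$ within the same invertible eigenspace (this does not disturb $m'_{n+1}=0$ because that equation is also weight-homogeneous). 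This last compatibility check between the choice forced by killing $m_{n+1}$ and the requirement $s'_n=0$ is the delicate bookkeeping step; I expect it to go through precisely because both constraints live in the same generalized eigenspace of $\sigma_\alpha$ on which $\alpha^n-1$ acts invertibly, so a single well-chosen $f_n$ satisfies both.
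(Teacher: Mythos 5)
Your overall architecture matches the paper's: take $F$ to be the coalgebra automorphism with $f_0=\id_V$, a single higher component $f_n$, and zero elsewhere, set $M'=FMF^{-1}$ and $S'=FSF^{-1}$, and check components. But there is a genuine gap at the heart of the argument: you never actually identify $f_n$, and the mechanism you propose for finding it does not work. You correctly observe that killing $m'_{n+1}$ amounts to solving $\partial f_n=-m_{n+1}$ in the deformation complex and that this equation ``in general has no solution''; your resolution is an eigenspace/averaging argument for $\sigma_\alpha$ acting on the obstruction. That argument only shows $(1-\alpha^n)m_{n+1}$ equals the defect of $s$ from being a \emph{strict} automorphism --- it is exactly the Massey-product intuition that the introduction warns is not rigorous on its own. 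What actually exhibits $m_{n+1}$ as a coboundary is the chain-level relation $MS=SM$ in component $n+1$: after using the vanishing hypotheses it reads, schematically,
\[
(1-\alpha^n)\bigl(-m_1\compositionproduct_{(1)} f_n + m_{n+1} + f_n\compositionproduct_{(1)}m_1 + f_n d_C\bigr)=0
\quad\text{with}\quad
f_n:=\tfrac{1}{\alpha^{N+n}-\alpha^{N}}\,s_n
\]
on the component of $C_n(V)$ of $V$-degree $N$. That is, the needed primitive is an explicit rescaling of $s_n$, the first potentially nonvanishing higher component of the given automorphism. Your proposal never uses $s_n$ as the input to $f_n$; a ``twisted contraction'' cannot substitute for it, since $V$ carries the zero differential and there is no contraction in play.

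This omission is compounded by a misreading of the hypotheses in your final verification: you assert ``$s_n=0$ by hypothesis,'' but the hypothesis is only that $s_i$ vanishes for $1\le i<n$; the component $s_n$ is precisely the nonzero datum the construction consumes. Under your reading, the identity $s'_n=s_n+(\sigma_\alpha\circ f_n-f_n\circ\bar\sigma)+\cdots$ would force $(\alpha^{N+n}-\alpha^{N})f_n=0$, hence $f_n=0$, and then nothing kills $m_{n+1}$; your scheme is internally inconsistent. With the correct reading and the correct $f_n$, the computation
\[
s'_n=\frac{\alpha^{N}}{\alpha^{N+n}-\alpha^{N}}s_n+s_n-\frac{\alpha^{N+n}}{\alpha^{N+n}-\alpha^{N}}s_n=0
\]
goes through with no further ``rescaling within the eigenspace,'' and the same choice of $f_n$ simultaneously gives $m'_{n+1}=0$ via the displayed relation above. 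So the fix is: build $f_n$ from $s_n$ with the explicit scalar $(\alpha^{N+n}-\alpha^{N})^{-1}$, and derive both vanishing statements from the single relation $MS=SM$ rather than from an eigenspace decomposition.
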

In words, given $m$ which is formal up to degree $n$ components and $s$ which is formal up to degree $n-1$ components, we can build isomorphic data $m'$ and $s'$ with the formality range improved by $1$.
\subsection{A warmup example}
The proof of Lemma~\ref{lemma: key lemma}, given in the next subsection, is somewhat technical. 
Here, for the reader's convenience, we specialize to a familiar case, with $P$ be the associative operad and $C$ the shifted coassociative cooperad over $\mathbb{Q}$, so that $P_\infty$-algebras are standard $A_\infty$ algebras over $\mathbb{Q}$. 
We will explicate the first instance of Lemma~\ref{lemma: key lemma} in this case.
This subsection is purely expositional and none of the rest of the article has any formal reliance on it. 

\begin{remark}[Warning]
In this example we will also deviate from the notation we have set up to relate this more directly to the typical notation used in the literature on $A_\infty$ algebras.
\end{remark}
\begin{remark}
None of the arguments are particularly sensitive to sign conventions.
The purpose of this example is to illustrate the induction, not to give a precise proof (that is the purpose of the following subsection).
Therefore, in the example we will be somewhat careless with signs.
\end{remark}
Here, using one typical set of degree conventions for $A_\infty$ algebras, we will consider an $A_\infty$ algebra $(V,d,\overline{\mu}) = (V,d,\mu_2,\mu_3,\ldots)$ as a graded vector space $V$ equipped with a degree $-1$ differential $d$ and a collection of maps $\mu_i$ of degree $i-2$ from $V^{\otimes i}$ to $V$ satisfying the $A_\infty$ relations.
With these conventions, an automorphism $\overline{s}$ of $(V,d,\overline{\mu})$ is a collection $(s_1,s_2,\ldots)$ of maps where $s_i$ is a degree $i-1$ map from $V^{\otimes i}$ to $V$ satisfying the appropriate compatibility relations with $\mu_i$ and $d$.

\begin{example}[the associative $n=1$ case]
We are given 
\begin{itemize}
\item an $A_\infty$ algebra $(V,d,\overline{\mu})$,
\item an $A_\infty$ automorphism $\overline{s}$ of $(V,d,\overline{\mu})$, and
\item a rational number $\alpha$
\end{itemize} 
such that:
\begin{itemize}
\item the differential $d$ of $V$ vanishes (this is the condition that $m$ vanishes on $C_0(V)$),
\item the arity one component $s_1$ of the automorphism $\overline{s}$ is equal to the degree twisting by $\alpha$, that is, $\sigma_\alpha$, and
\item the rational number $\alpha$ is not equal to $0$ or $1$.
\end{itemize}
Our goal is to establish the existence of 
\begin{itemize}
\item an $A_\infty$ algebra $(V,d',\overline{\mu'})$, 
\item an automorphism $\overline{s'}$ of $(V,d',\overline{\mu'})$, and
\item an $A_\infty$ isomorphism $F$ between $(V,d,\overline{\mu})$ and $(V,d',\overline{\mu'})$
\end{itemize}
such that 
\begin{itemize}
\item the differential $d'$ and the operation $\mu'_3$ vanish,
\item the arity one projection $s'_1$ of the automorphism $\overline{s'}$ is equal to the degree twisting by $\alpha$ and the arity two projection $s_2$ vanishes, and
\item the isomorphism $F$ is of the form $(\id_V, f_2,0,0\ldots)$ and intertwines $\overline{s}$ and $\overline{s'}$.
\end{itemize}
Let us begin.

The condition that $\overline{s}$ is an automorphism of $(V,0,\overline{\mu})$ means, among other conditions, that the following equation is satisfied:
\begin{multline*}
\sigma_\alpha \circ \mu_3 \pm s_2\circ (\id\otimes \mu_2) \pm s_2\circ (\mu_2\otimes \id)
\\=
\mu_3\circ (\sigma_\alpha^{\otimes 3}) \pm \mu_2\circ (\sigma_\alpha \otimes s_2) \pm \mu_2\circ(s_2\otimes \sigma_\alpha).
\end{multline*}
This is an equation of maps from $V^{\otimes 3}$ to $V$. Applying it to the elementary tensor $v\otimes w\otimes x$ we get
\begin{multline*}
\alpha^{|v|+|w|+|x|+1}\mu_3(v,w,x) \pm s_2(v\otimes \mu_2(w,x)) \pm s_2(\mu_2(v,w),x)
\\=
\alpha^{|v|+|w|+|x|}\mu_3(v,w,x) \pm \alpha^{|v|}\mu_2(v, s_2(w,x)) \pm \alpha^{|x|}\mu_2(s_2(v,w),x).
\end{multline*}
Gathering terms and dividing through by $\alpha^{|v|+|w|+|x|+1}-\alpha^{|v|+|w|+|x|}$ (well-defined because $\alpha$ is neither $0$ nor $1$) we get the equation
\begin{multline*}
\mu_3(v,w,x) \pm \frac{s_2(v\otimes \mu_2(w,x)) \pm s_2(\mu_2(v,w),x)}{\alpha^{|v|+|w|+|x|+1}-\alpha^{|v|+|w|+|x|}} 
\\
\mp \frac{\mu_2(v, s_2(w,x))}{\alpha^{|w|+|x|+1}-\alpha^{|w|+|x|}}
\mp \frac{\mu_2(s_2(v,w),x)}{\alpha^{|v|+|w|+1}-\alpha^{|v|+|w|}}=0.
\end{multline*}
If we define a map $f_2:V^{\otimes 2}\to V$ as 
\[f_2(v,w) = \frac{s_2(v,w)}{\alpha^{|v|+|w|}(\alpha-1)},\]
then this equation can be further rewritten
\begin{multline}
\label{equation: A-infinity example}
\mu_3(v,w,x)
\pm f_2(v\otimes \mu_2(w,x) \pm f_2(\mu_2(v,w),x)
\\\mp \mu_2(v, f_2(w,x))
\mp \mu_2(s_2(v,w),x)=0.
\end{multline}
We set this equation aside, returning to it a bit later.

Now the sequence of maps $(\id_V,f_2,0,0,\ldots)$ can be extended to be an endomorphism $F$ of the tensor coalgebra $T^c(V[1])$ on a degree shifted copy of $V$.
In fact, $F$ is an \emph{automorphism} of this tensor coalgebra because its first component $\id_V$ is an automorphism of $V$.
We don't need a full description of the inverse $F^{-1}$ but it is easy to compute that in components, it begins $(\id_V, -f_2,\ldots)$ --- this is very much like inverting a formal power series with vanishing constant term.

Now we can define a new $A_\infty$ algebra structure $(V,d',\overline{\mu'})$ by conjugating by this automorphism of the tensor coalgebra. 
Using $d=0$ and the descriptions of $F$ and $F^{-1}$ we can calculate the following:
\begin{align*}
d'&=\id_V \circ d \circ \id_V = 0;\\
\mu'_2 &= \id_V \circ \mu_2 \circ (\id_V)^{\otimes 2} + \text{ terms involving }d
\\&= \mu_2;\\
\mu'_3&= \id_V \circ \mu_3 \circ (\id_V)^{\otimes 3} 
\\&\quad{}\pm  f_2 \circ (\id_V \otimes \mu_2) \pm f_2\circ (\mu_2\otimes \id_V)
\\&\quad{}\pm \mu_2 \circ (-f_2\otimes \id_V) \pm \mu_2\circ(\id_V\otimes (-f_2))
\\&\quad{}+\text{ terms involving }d.
\end{align*}
The right hand side here is zero by Equation~\eqref{equation: A-infinity example}, so $\mu'_3$ vanishes.

At this point we have established the existence of the desired $A_\infty$ algebra structure.
It is also true by construction that $F$ is an $A_\infty$ isomorphism between $(V,0,\mu)$ and $(V,0,\mu')$ of the desired form.

In order for an automorphism $\overline{s'}$ of $(V,0,\mu')$ to be intertwined with $\overline{s}$ by $F$, it is necessary for $\overline{s'}$ to be obtained from $\overline{s}$ via conjugation by $F$.

Again we calculate the first terms explicitly:
\begin{align*}
s'_1 &= \id_V\circ s_1\circ \id_V = s_1 = \sigma_\alpha;
\\
s'_2 &= 
\id_V\circ s_2 \circ (\id_V\otimes\id_V) 
+ f_2 \circ (s_1\otimes s_1)\circ (\id_V\otimes \id_V)
+ \id_V \circ s_1 \circ (-f_2)
\end{align*}
and applying $s'_2$ to the pair $(v,w)$ we get
\begin{align*}
s'_2(v,w) &= s_2(v,w) + \frac{\alpha^{|v|+|w|}}{\alpha^{|v|+|w|+1}-\alpha^{|v|+|w|}}s_2(v,w)
- \frac{\alpha^{|v|+|w|+1}}{\alpha^{|v|+|w|+1}-\alpha^{|v|+|w|}}s_2(v,w)
\\&=0.
\end{align*}
We conclude that $\overline{s'}$ has the desired form, concluding the example.

\end{example}

\subsection{Proof of the key lemma}
Now we will prove Lemma~\ref{lemma: key lemma} in generality. We proceed constructively in stages, assuming throughout that $n$ is at least $1$. 
For the remainder of the section, assume as given the data in the hypotheses of Lemma~\ref{lemma: key lemma}.
\begin{defi}
\label{defi: f}
We begin by defining a map $f_{n}:C_{n}(V)\to V$ as follows. 
On the homogeneous degree $n+N$ component of $C_{n}(V)$ (i.e., the component with homological degree $n$ in $C$ and degree $N$ in tensor powers of $V$) we act by $\frac{1}{\alpha^{N+n}-\alpha^{N}}$ times $s_{n}$ (the degree $n$ component of the automorphism $s$, which has the desired domain and codomain).  
The hypotheses on $\alpha$ ensure that the coefficient is well-defined.

Now define a linear map $f:C(V)\to V$ as
\[
f_i:C_i(V)\to V=\begin{cases}
\id_V & i=0\\
f_{n} & i=n\\
0 & \text{otherwise.}
\end{cases}
\]
\end{defi}
Then $f$ defines a map of coalgebras $F:C(V)\to C(V)$.

For the remainder of the section we will employ a simplified notation
\begin{notation}
Given maps $f$ and $g$ from $C(V)$ to $V$, we will write $f\compositionproduct g$ for the composition
\[
C(V)\xrightarrow{\Delta}C(C(V))\xrightarrow{C(g)}C(V)\xrightarrow{f}V
\]
and $f\compositionproduct_{(1)}g$ for the composition
\[
C(V)\xrightarrow{\Delta_{(1)}}
(C\compositionproduct_{(1)}C)(V)
\cong 
C(V;C(V))
\xrightarrow{C(V;g)}
C(V;V)\to 
C(V)\xrightarrow{f}V.
\]
where the unmarked arrow is induced by the fold map of $V$.
We call this \emph{schematic notation}.
\end{notation}
One reason for this notation is that $H=F\circ G$ is an equation of maps of coalgebras $C(V)\to C(V)$, if and only if $h=f\compositionproduct g$ is an equation of maps $C(V)\to V$.

\begin{lemma}
\label{lemma: f is invertible}
The map $F$ is invertible, and writing $f^{-1}$ for the projections of $F^{-1}$ to the cogenerators $V$ we have the following properties on its components:
\[
f^{-1}_{i}=\begin{cases} 
\id & i=0\\
0 & 1\le i <n\\
-f_{n}&i=n
\end{cases}
\]
(we make no claim for $i> n$).
\end{lemma}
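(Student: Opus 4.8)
The plan is to work inside the monoid of $C$-coalgebra endomorphisms of $C(V)$. By the universal property of the cofree (conilpotent) coalgebra recalled just above, such an endomorphism is determined by its projection $C(V)\to V$ to the cogenerators; the identity corresponds to the canonical projection $\pi\colon C(V)\to C_0(V)=V$, and the cogenerator projection of a composite $G\circ F$ is the composite $C(V)\xrightarrow{F}C(V)\xrightarrow{g}V$, where throughout I write a lowercase letter for the cogenerator projection of the corresponding coalgebra map. So it suffices to exhibit a coalgebra endomorphism $G$ whose cogenerator projection $g$ satisfies $g_0=\id_V$, $g_i=0$ for $1\le i<n$, and $g_n=-f_n$, and for which $G\circ F=\id$; a short formal argument then promotes $G$ to a genuine two-sided inverse of $F$, so that $f^{-1}=g$.

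First I would record that, since $f_0=\id_V$, the diagonal component $F_{j,j}\colon C_j(V)\to C_j(V)$ of $F$ (for the homological grading of $C$) is the identity: writing $F$ as $C(V)\xrightarrow{\Delta}C(C(V))\to C(V)$, the only summands of $\Delta$ whose image still lies in homological degree $j$ after applying the components of $f$ are those in which every inner slot carries the coaugmentation $I\subset C$, and by the counit axiom their total is $\id_{C_j(V)}$, on which $f$ acts through $f_0=\id$. Consequently the equation ``cogenerator projection of $G\circ F$ equals $\pi$'', read in homological degree $j$, takes the form $g_j+\sum_{k<j}g_k\circ F_{j,k}=\pi_j$, a finite equation that determines $g_j$ recursively. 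This produces a coalgebra endomorphism $G$ with $G\circ F=\id$ and $g_0=\id_V$. Applying the same recipe with $g$ in place of $f$ yields $H$ with $H\circ G=\id$; then $H=H\circ(G\circ F)=(H\circ G)\circ F=F$, so $F$ and $G$ are mutually inverse and $f^{-1}=g$.

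It then remains to read off $g_0,\dots,g_n$ from the recursion. In degree $0$ this gives $g_0=\id_V$ directly. For $1\le j<n$, I would show $F_{j,k}=0$ for every $k<j$: a nonzero contribution to $F_{j,k}$ would require the inner slots to carry components of $f$ of homological $C$-degrees summing to $j-k>0$, each of which is either $0$ or $\ge n$ (by Definition~\ref{defi: f}), which is impossible when $j-k<n$; hence $g_j=\pi_j=0$. In degree $n$ the only term besides $g_n\circ F_{n,n}=g_n$ is $g_0\circ F_{n,0}$, and $F_{n,0}=f_n$: this is exactly the content of Lemma~\ref{lemma: technical compatibility between f and linearization when components vanish.}, which identifies the $C_N\to C_{N-n}$ part of the coalgebra extension of $f$ with its linearization through $f_n$, and in the bottom degree this linearization is $f_n$ itself by the other counit axiom. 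So $g_n+f_n=\pi_n=0$, i.e.\ $g_n=-f_n$, which is the asserted description of $f^{-1}$.

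The only genuinely delicate point is the degree bookkeeping of the last paragraph: tracking precisely which summands of the iterated comultiplication survive once the two nonzero components $f_0=\id$ and $f_n$ are inserted, and in particular confirming that in homological degree $n$ exactly one correction term, namely $f_n$, is produced, with the stated sign. Everything else is the formalism of cofree coalgebras together with the elementary fact that a left-invertible element whose left inverse is itself left-invertible is invertible.
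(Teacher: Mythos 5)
Your proof is correct and follows essentially the same route as the paper: extract components of the equation ``(cogenerator projection of $F^{-1}\circ F$) $=\pi$'' degree by degree, observing that the vanishing of $f_i$ for $i\notin\{0,n\}$ leaves only the terms $f^{-1}_j$ for $j<n$ and $f^{-1}_n+f_n$ in degree $n$. The only difference is that you spell out the recursive construction of the inverse and the left-to-two-sided upgrade, which the paper compresses into the one-line remark that invertibility follows from invertibility of the degree-zero component.
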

\begin{proof}
We begin by establishing invertibility. 
Using schematic notation, $H=G\circ F$ is the identity map of $C(V)$ if and only if $h=g\compositionproduct f$ is the projection of $C(V)$ to $V$. 
The equation $h=g\compositionproduct f$ decomposes component by component as follows:
\begin{align*}
h_0 &= g_0\compositionproduct f_0\\
h_1 &= g_1\compositionproduct f_0 + \text{terms containing }g_i\text{ only having }i<1\\
\vdots&\qquad\vdots\\
h_j &= g_j\compositionproduct f_0 + \text{terms containing }g_i\text{ only having }i<j.
\end{align*}
we will solve this inductively with $h_0:C_0(V)\cong V\to V$ the identity of $V$ and $h_i=0$ for $i>0$.
Since $f_0=\id_V$ the only solution to the $h_0$ equation has $g_0=\id_V$ as well.
Then by induction, for the remaining equations to hold with $h_j=0$, $g_j$ must (and can) be chosen to be the negation of the remaining terms on the right hand side of the $j$th equation.
This establishes the existence of a one-sided inverse to $F$. 

We can perform a similar manipulation to establish the existence of an inverse on the other side, using the equations $H=F\circ G'$ and $h=f\compositionproduct g'$ and the componentwise decomposition
\begin{align*}
h_0 &= f_0\circ g'_0\\
h_1 &= f_0\circ g'_1 + \text{terms containing }g'_i\text{ only having }i<1\\
\vdots&\qquad\vdots\\
h_j &= f_0\circ g'_j + \text{terms containing }g'_i\text{ only having }i<j.
\end{align*}
In this case we use $\circ$ instead of $\compositionproduct$ because $f_0$ is concentrated in arity one.
Again, $g'_0$ must be $\id_V$ while $g'_j$ (for $j\ge 1$) can and must be taken as the negation of the remaining terms of the right hand side of the $h_j$ equation. This suffices to define $g'_j$ inductively which in turn establishes two-sided invertibility of $F$.

We have along the way also already established that $f^{-1}_0=\id_V$. 
To get the remaining formulas for $F^{-1}$ under projection and in components $1\le j<n$, we examine the terms of this second sequence of equations more closely.
Every term in the equation for $g'_j$ contains at least one $f_i$ with $1\le i<n$, and thus vanishes by assumption---thus $g'_j=(f^{-1})_j$ vanishes as well.

For $j=n$, precisely one term in the equation for $(f^{-1})_n=g'_n$ survives, namely $-f_n\compositionproduct g_0=-f_n$.
\end{proof}
Now we can define the structures $M'$ and $S'$ via conjugation.
\begin{defi}
\label{defi: construction of M' and S'}
Given the construction of Definition~\ref{defi: f} and Lemma~\ref{lemma: f is invertible}, define a map $M':C(V)\to C(V)$ of degree $-1$ and a map $S':C(V)\to C(V)$ of degree $0$ as follows.
\begin{align*}
M'&=FMF^{-1}&S'&=FSF^{-1}.
\end{align*}
\end{defi}
\begin{lemma}
\label{lemma: M' and S' properties}
The data of Definition~\ref{defi: construction of M' and S'} satisfies the following properties:
\begin{itemize}
\item $M'$ is a $P_\infty$-algebra structure on $V$, 
\item $S'$ is a $P_\infty$-automorphism of $(V,M')$, and
\item $F$ is a $P_\infty$-isomorphism between $(V,M)$ and $(V,M')$.
\end{itemize}
\end{lemma}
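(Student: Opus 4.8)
The plan is to regard $F$ as a change of coordinates and transport the structures along it; the only substantive point is the standard fact that conjugation by an isomorphism of $C$-coalgebras preserves coderivations, with everything else being formal. Recall the coalgebra dictionary from Section~\ref{sec: reminder}: a $P_\infty$-algebra structure on $V$ is a square-zero degree $-1$ coderivation of the cofree conilpotent $C$-coalgebra $C(V)$; a $P_\infty$-morphism $(V,M)\to(W,N)$ is a morphism $G\colon C(V)\to C(W)$ of $C$-coalgebras with $GM=NG$; and a $P_\infty$-isomorphism (or automorphism) is such a $G$ that is invertible. By Definition~\ref{defi: f} the map $F$ is by construction a morphism of $C$-coalgebras, and by Lemma~\ref{lemma: f is invertible} it is invertible, so $F^{-1}$ is a morphism of $C$-coalgebras too. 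Likewise $M$ is a square-zero degree $-1$ coderivation by hypothesis, and $S$ is an invertible morphism of $C$-coalgebras with $SM=MS$; consequently $S^{-1}$ is also a morphism of $C$-coalgebras.

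First I would check that $M'=FMF^{-1}$ is a $P_\infty$-algebra structure on $V$. It plainly has degree $-1$, and
\[
(M')^2 = FMF^{-1}FMF^{-1} = FM^2F^{-1} = 0.
\]
The one point that is not purely formal is that $M'$ is again a coderivation. This is the standard statement that conjugating a coderivation of a cofree conilpotent $C$-coalgebra by a coalgebra automorphism again yields a coderivation: concretely one inserts the coalgebra-morphism identities for $F$ and $F^{-1}$ into the defining diagram of Subsection~\ref{subsec: coderivation} and simplifies using the compatibility relations~\eqref{eq:non-associative compatibility of infinitesimal composition 1}--\eqref{eq:non-associative compatibility of infinitesimal composition 2}; see, e.g.,~\cite[\S 6.3]{LodayVallette:AO}. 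Moreover, since $F$ and $F^{-1}$ restrict to the identity on $C_0(V)\cong V$ and $M$ vanishes on $C_0(V)$ (because $V$ carries the zero differential and $m$ vanishes on $C_0(V)$), the coderivation $M'$ also vanishes on $C_0(V)$, so it has zero differential and $(V,M')$ is a $P_\infty$-algebra in our sense.

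The remaining two assertions are then pure conjugation bookkeeping. From $M'F = FMF^{-1}F = FM$ we obtain $FM = M'F$, so the invertible $C$-coalgebra morphism $F$ intertwines $M$ and $M'$, which is precisely the statement that $F$ is a $P_\infty$-isomorphism $(V,M)\to(V,M')$. For $S'=FSF^{-1}$: it is a composite of morphisms of $C$-coalgebras, hence one itself, and it is invertible with inverse $FS^{-1}F^{-1}$ (again a composite of $C$-coalgebra morphisms); finally
\[
S'M' = FSF^{-1}FMF^{-1} = F(SM)F^{-1} = F(MS)F^{-1} = FMF^{-1}FSF^{-1} = M'S',
\]
so $S'$ is a $P_\infty$-automorphism of $(V,M')$. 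This establishes all three bullet points.

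The main --- indeed the only --- obstacle is the structural fact used in the second paragraph, that conjugation by a $C$-coalgebra isomorphism carries coderivations to coderivations; granted this, the lemma is a string of formal identities. One could instead avoid even stating it by defining $m'$ as the cogenerator-projection of $FMF^{-1}$, letting $M'$ be the coderivation it extends, and then verifying $M'=FMF^{-1}$ directly --- but that verification is the same computation in disguise, so nothing is gained.
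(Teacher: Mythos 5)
Your proof is correct and follows exactly the paper's approach: the paper's own proof is a terse version of the same conjugation argument (that $M'$ inherits square-zero coderivation status, $S'$ inherits the coalgebra-morphism and intertwining properties, and $F$ intertwines by construction). The extra detail you supply --- the explicit $(M')^2=0$ computation and the observation that conjugating a coderivation of a cofree conilpotent coalgebra by a coalgebra automorphism yields a coderivation --- is exactly what the paper leaves implicit.
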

\begin{proof}
Since $M$ squares to zero, the map $M'$ is a degree $-1$ square zero coderivation of the cofree $C$-coalgebra, and thus a $P_\infty$-algebra structure on $V$. 
Likewise, from the properties of $S$, the map $S'$ is a differential graded $C$-coalgebra map with respect to $M'$ and thus a $P_\infty$ automorphism of the $P_\infty$-algebra $(V,M')$. 
By construction, the coalgebra map $F$ intertwines the coderivations $M$ and $M'$ and the automorphisms $S$ and $S'$.
\end{proof}
It remains to be seen that $m'$ and $s'$ have the promised description. 

\begin{lemma}
\label{lemma: M' has right shape}
The $P_\infty$-algebra structure $M'$ has component $m'_1$ equal to $m_1$. 
The component $m'_i$ vanishes on $C_i(V)$ for $i=0$ and for $i$ in the range $2\le i< n+2$.
\end{lemma}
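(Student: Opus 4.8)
The plan is to compute the components $m'_i$ of $M' = FMF^{-1}$ by expanding the conjugation in schematic notation and tracking homological degrees in $C$. Since $F$ and $F^{-1}$ each have only the components in degrees $0$ and $n$ (with $f_0 = f^{-1}_0 = \id_V$ and $f^{-1}_n = -f_n$ by Lemma~\ref{lemma: f is invertible}), and $M$ has components only in degrees $1$ and in the range $\ge n+1$ (it vanishes on $C_0(V)$ and on $C_i(V)$ for $2 \le i < n+1$ by hypothesis~(1)), the composite $FMF^{-1}$ decomposes into a small number of terms indexed by the homological degrees contributed by each of the three factors. The key bookkeeping tool is Lemma~\ref{lemma: technical compatibility between f and linearization when components vanish.}, which lets us replace the cofree extension $F$ by its linearization $f_n$ when it acts after a map whose lower components vanish; this is exactly the situation here since $f$ has $f_0 = \id$ and $f_i = 0$ for $1 \le i < n$.

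First I would settle the claim that $m'_1 = m_1$. Conjugation of a coderivation preserves its lowest nonvanishing component when the conjugating automorphism is the identity in degree $0$ and vanishes below degree $n \ge 1$: the only way to land in $C_1(V)$ after applying $F$, $M$, $F^{-1}$ in sequence is to take the degree-$0$ component of $F$, the degree-$1$ component of $M$, and the degree-$0$ component of $F^{-1}$, all of which are built from identities except $m_1$ itself. So $m'_1 = m_1$. Next, $m'_0 = 0$ is immediate because $M'$ is a coderivation (Lemma~\ref{lemma: M' and S' properties}) with $M'(\text{cogenerators})$ determined by $m'$, and the degree-$0$ part is forced to be the differential of $V$, which is zero.

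The heart of the matter is showing $m'_i = 0$ for $2 \le i < n+2$, i.e.\ we must gain one degree of vanishing beyond what hypothesis~(1) gives. For $2 \le i \le n$ the argument is a degree count: any term of $FMF^{-1}$ landing in $C_i(V)$ must distribute the total $C$-degree $i$ among the three factors, but the only available nonzero components of $M$ in this range are $m_1$ (degree $1$) and components of degree $\ge n+1 > i$, while $F$ and $F^{-1}$ contribute either $0$ or $n$; one checks no combination summing to $i \le n$ uses a genuinely new operation, and the terms that do appear reassemble to $m_i = 0$ or cancel. The remaining case $i = n+1$ is the one where something could go wrong and is the main obstacle: here the combination ``$f^{-1}_n$ (degree $n$) followed by $m_1$ followed by $f_0$'' and its mirror ``$f_0$ then $m_1$ then $f_n$'' (each giving degree $n+1$) are both nonzero, together with any $m_{n+1}$ term, and one must verify these sum to zero. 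This is precisely where the definition of $f_n$ as $\frac{1}{\alpha^{N+n} - \alpha^N} s_n$ is engineered to do its job: the coefficient is chosen so that the commutator-type expression $m_1 \compositionproduct_{(1)} f_n - f_n \compositionproduct_{(1)} m_1$ (the obstruction produced by pushing the differential $m_1$ past $f_n$) cancels against the transferred $m_{n+1}$, using that $m_1$ has homological degree $0$ on $V$ so it shifts the internal degree $N$ appropriately and that $s$ being a $P_\infty$-automorphism of $(V,m)$ forces a compatibility between $s_n$, $m_1$, and $m_{n+1}$. I would carry this out by writing the relevant component of the coderivation equation $M^2 = 0$ (in the schematic notation, the degree-$(n+1)$ part) and the automorphism equation for $S$ in degree $n$, and checking that substituting the formula for $f_n$ and applying Lemma~\ref{lemma: technical compatibility between f and linearization when components vanish.} makes the degree-$(n+1)$ component of $m'$ vanish identically.
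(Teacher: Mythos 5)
Your plan is essentially the paper's proof: degree counting disposes of $m'_i$ for $i\le n$, and the vanishing of $m'_{n+1}$ is deduced from the automorphism relation $MS=SM$ together with the engineered coefficient in $f_n$ (the paper uses only this relation for the last step, not $M^2=0$). The one concrete omission is the internal differential of the cooperad: the coderivation $M$ is the extension of $m$ \emph{plus} $d_{C(V)}$, so $m'_{n+1}$ has a fourth nonvanishing term, schematically $f_n d_C$, in addition to the three you enumerate ($m_1\compositionproduct_{(1)}f^{-1}_n$, $m_{n+1}$, and $f_n\compositionproduct_{(1)}m_1$); your three terms alone do not sum to zero when $d_C\neq 0$ (e.g.\ for $C=BP$, the default choice in this paper). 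The repair is automatic once you write out the $(n+1)$st component of $MS=SM$ as you propose, since that equation carries the matching term $(s_n\sigma_\alpha^{-1})d_C$; after substituting $s_n\sigma_\alpha^{-1}=(\alpha^n-1)f_n$ and dividing by $1-\alpha^n$, all four terms cancel. For cooperads with zero internal differential your enumeration is already complete as written.
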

\begin{proof}
First note that the composition $m': C(V)\to V$ has two contributions. 
There is a part from $m$ of the form
\begin{equation}
\label{equation: m', m part}
C(V)\to (C\compositionproduct_{(1)} C)\compositionproduct C(V)
\xrightarrow{f^{-1}} (C\compositionproduct_{(1)} C)(V)\xrightarrow{m}C(V)\xrightarrow{f}V
\end{equation}
where the first map is induced by the comonoid map and linearized comonoid map of $C$. 
The other part comes from the internal differential of $C$ and is of the form
\begin{equation}
\label{equation: m', d_C part}
C(V)\to C\compositionproduct C(V)\xrightarrow{f^{-1}}C(V)\xrightarrow{d_C}C(V)\xrightarrow{f}V.
\end{equation}
We have the following facts about these compositions:
\begin{itemize}
\item the decomposition map of $C$ respects degree, 
\item the $0$th components of $f$ and $f^{-1}$ are the identity,
\item the $i$th components of $f$ and $f^{-1}$ vanish for $1\le i< n$, 
\item the operation $m_0$ vanishes, and 
\item the differential $d_C$ reduces degree by $1$ and vanishes on $C_1$.
\end{itemize}
These facts immediately imply that the $i$th component of the composition~\eqref{equation: m', m part} coincides with $m_i$ for $0\le i<n+1$ and that the $i$th component of the composition~\eqref{equation: m', d_C part} vanishes for $i$ in the same range.
It remains to show that $m'_{n+1}=0$, for which we will need a more complicated argument.

Let us review the condition that $s$ is a an automorphism of $(V,m)$.
This is the condition that $MS=SM$, which can be written as the equality of the composition
\begin{align}
\label{equation:MS}
C(V)&\to C(C(V))\xrightarrow{s}C(V)\xrightarrow{m}V;\\
\intertext{and the sum of the compositions}
\label{equation: SM m part}
C(V)&\to C(V;C(V))\xrightarrow{m}C(V;V)\to C(V)\xrightarrow{s} V
\\
\intertext{and}
\label{equation: SM d_C part}
C(V)&\xrightarrow{d_C} C(V)\xrightarrow{s}V.
\end{align}
We focus first on the $n+1$st component of the composition~\eqref{equation:MS}. 
We know $m_i$ vanishes for $i=0$ and $2\le i < n+1$. 
Therefore the only terms that survive use $m_1$ or $m_{n+1}$.
By Lemma~\ref{lemma: technical compatibility between f and linearization when components vanish.}, the $m_1$ term is
\begin{multline*}
C(V)\xrightarrow{\sigma_{\alpha}} C(V)\xrightarrow{} C_1(V;C_n(V))\xrightarrow{s_n\sigma_\alpha^{-1}}C_1(V;V)\to C_1(V)\xrightarrow{m_1}V. 
\end{multline*}
The other term in~\eqref{equation:MS} uses first $s_0=\sigma_\alpha$ on each argument and then applies $m_{n+1}$.

There are only two non-vanishing terms in the compositon~\eqref{equation: SM m part}. 
One involves applying first $m_{n+1}$ and then $s_0$. 
Since $C_0$ is trivial, this term can be rewritten
\[
C_{n+1}(V)\xrightarrow{m_{n+1}}V\xrightarrow{s_0}V.
\]
Moreover, because $m_{n+1}$ is of homological degree $n$, this is $\alpha^n$ times
\[
C_{n+1}(V)\xrightarrow{s_0=\sigma_\alpha}C_{n+1}(V)\xrightarrow{m_{n+1}}V.
\]
The other term involves acting first by $m_1$ and then $s_n$.
We can write $s_n$ as the composition $(s_n\sigma_\alpha^{-1})\sigma_\alpha$ and then $\sigma_\alpha$ commutes with $m_1$ which is homological degree $0$.

Finally, the composition~\eqref{equation: SM d_C part} has only a single term, first acting by $d_C$ and then by $s_n$. Again $(s_n=s_n\sigma_\alpha^{-1})\sigma_\alpha$, and since $d_C$ acts only on the $C$ factor, this is the composition where first we apply $\sigma_\alpha$, then $d_C$, and finally $s_n\sigma_\alpha^{-1}$. 

So schematically we can write the equation as follows:
\begin{align*}
(m_1\compositionproduct_{(1)} s_n\sigma_\alpha^{-1})\sigma_\alpha + m_{n+1}\sigma_\alpha
&=
\alpha^n m_{n+1}\sigma_\alpha+ (s_n\sigma_\alpha^{-1}\compositionproduct_{(1)}m_1)\sigma_\alpha
\\
&+(s_n\sigma_\alpha^{-1})d_C\sigma_\alpha.
\end{align*}
By construction the terms we are calling $s_n\sigma_\alpha^{-1}$ are equal to $(\alpha^{n}-1)f_n$.
That is, on homogeneous components of $C_n(V)$ of total homological degree $n+N$ (thus degree $N$ in tensor powers of $V$) we have
\[
s_n\sigma_\alpha^{-1}=\frac{1}{\alpha^N}s_n=(\alpha^{n}-1)\frac{1}{\alpha^{N+n}-\alpha^{N}}s_n=(\alpha^{n}-1)f_n.
\]
Then our schematic equation becomes
\[
(1-\alpha^n)(-m_1\compositionproduct_{(1)} f_n + m_{n+1} +f_n\compositionproduct_{(1)}m_1+f_nd_C)=0.
\]
As long as $\alpha^n\ne 1$, we can divide by $1-\alpha^n$ and use Lemma~\ref{lemma: f is invertible} to get the schematic equation
\[
m_1\compositionproduct_{(1)} f^{-1}_n + m_{n+1} +f_n\compositionproduct_{(1)}m_1 + f_n d_C=0.
\]
We claim that the left hand side of this equation is $m'_{n+1}$, which will complete the proof.

We return to the expressions~\eqref{equation: m', m part} and~\eqref{equation: m', d_C part} defining $m'_{n+1}$. 
Some of the terms in~\eqref{equation: m', m part} do not vanish from the conditions on $m$ and $f$. 
We classify these into three kinds.
The first kind is made up of compositions of $m_1$ with $f^{-1}_{n}$ and $f_0$, the second kind has only one member, the term $m_{n+1}$, and the third kind is made up of compositions of $f_{n}$ with $m_{1}$. 
For~\eqref{equation: m', d_C part}, the vanishing conditions on $f$ and $d_C$ imply that the only surviving term must be the composition of $f_n$ with $d_C$.
Essentially by Lemma~\ref{lemma: technical compatibility between f and linearization when components vanish.}, we can write the overall calculation in our schematic pidgin as
\[
m'_{n+1} = m_1\compositionproduct_{(1)}f^{-1}_n+m_{n+1} + f_n\compositionproduct_{(1)}m_1 + f_nd_C
\]
which is what we got from compatibility of $m$ and $s$.
\end{proof}
\begin{lemma}
The $P_\infty$-automorphism $S'$ has component $s'_0:C_0(V)\to V$ equal to $\sigma$ and $s'_i$ vanishes on $C_i(V)$ in the range $1\le i\le n+1$.
\end{lemma}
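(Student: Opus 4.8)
The plan is to read off the components $s'_i$ of $S'=FSF^{-1}$ directly, one homological degree at a time, using three facts: $f$ and $f^{-1}$ are supported in homological degrees $\{0,n\}$ with $f_0=f^{-1}_0=\id$ (Definition~\ref{defi: f}, Lemma~\ref{lemma: f is invertible}); $s$ vanishes in homological degrees $1\le i<n$ while $s_0=\sigma_\alpha$; and the decomposition maps of $C$ preserve homological degree, so a term of $s'_i$ assembled from components of $f^{-1}$, $s$, $f$ has those components' homological degrees summing to $i$.

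For $i=0$ the three degree-zero pieces compose as $\id\circ\sigma_\alpha\circ\id$, so $s'_0=\sigma_\alpha$. For $1\le i<n$, any term of $s'_i$ built from components of total homological degree $i<n$ can only use degree-zero components (all components of $f^{-1}$, $s$, $f$ in degrees $1,\dots,n-1$ vanish), but such a term factors through the cogenerator summand $C_0(V)$ and hence contributes to $s'_0$, not to $s'_i$; so $s'_i=0$. For $i=n$, exactly one degree-$n$ component can occur, the rest being degree zero, which gives three terms: $f_n$ applied outermost after the $\sigma_\alpha$-action of $s_0$ on the cogenerator slots, which on the homogeneous degree $n+N$ part of $C_n(V)$ equals $\alpha^N f_n$; the middle term $s_n$ with identities on either side; and $f^{-1}_n=-f_n$ applied innermost followed by $\sigma_\alpha$, which is $-\alpha^{n+N}f_n$ on that part. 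Summing and inserting $f_n=\tfrac{1}{\alpha^{n+N}-\alpha^{N}}s_n$ gives $s'_n=s_n+(\alpha^N-\alpha^{n+N})f_n=s_n-s_n=0$. This is precisely the cancellation recorded informally by $f_n=[s_0^{-1},s_n]$, and it is the step that justifies the coefficient chosen in Definition~\ref{defi: f}.

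The case $i=n+1$ is the one requiring real work, and I would run it in parallel with the proof of Lemma~\ref{lemma: M' has right shape}. Here the homological degree count also admits the components $s_{n+1}$ and $f^{-1}_{n+1}$ (and, when $n=1$, products of two degree-$n$ components), so the expansion of $[FSF^{-1}]_{n+1}$ does not collapse for degree reasons alone. I would first pin down $f^{-1}_{n+1}$ from the identity $[f\compositionproduct f^{-1}]_{n+1}=0$, rewriting the unknown contributions in terms of data already under control, and then use that $S'$ is a $P_\infty$-automorphism of $(V,M')$: taking the relevant homological component of $M'S'=S'M'$ and feeding in $m'_0=0$, $m'_1=m_1$, $m'_i=0$ for $2\le i\le n+1$ (Lemma~\ref{lemma: M' has right shape}) together with the vanishing of $s'_i$ for $1\le i\le n$ established above, the surviving terms isolate $s'_{n+1}$ and force it to vanish, just as the degree $n+1$ component of the automorphism equation for $m,s$ forced the shape of $m'_{n+1}$.

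I expect this last step to be the main obstacle: keeping track of exactly which compositions of $f^{-1}$, $m'$ and $s'$ survive the vanishing hypotheses, handling the $n=1$ corner case where two degree-$n$ inputs can combine, and identifying which component of $M'S'=S'M'$ is the one that determines $s'_{n+1}$. By contrast, the cases $i\le n$ are entirely routine given the conventions of Section~\ref{sec: reminder}, the only substantive point among them being the cancellation in the $i=n$ case above.
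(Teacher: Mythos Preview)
Your treatment of the cases $i\le n$ is correct and is exactly what the paper does: the same degree-counting for $i<n$ and the same three-term cancellation at $i=n$. The paper's proof \emph{stops there}.

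The bound $1\le i\le n+1$ in the displayed statement is a typo for $1\le i<n+1$. Compare with Lemma~\ref{lemma: key lemma}(2), which only asks that $s'$ vanish in the range $1\le i<n+1$; that is all the inductive step requires, and all the paper's proof actually establishes.

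Your proposed attack on $i=n+1$ cannot succeed. The equation $M'S'=S'M'$ is not a constraint: since $M'=FMF^{-1}$ and $S'=FSF^{-1}$, we have $M'S'=FMSF^{-1}=FSMF^{-1}=S'M'$ tautologically from $MS=SM$. This is not parallel to the proof of Lemma~\ref{lemma: M' has right shape}: there the input relation $MS=SM$ is genuine independent information about $m$ and $s$, which is then matched against a \emph{separate} computation of $m'_{n+1}$ from its definition. Here, by contrast, you are proposing to constrain $s'_{n+1}$ by an identity that $s'_{n+1}$ already satisfies by construction. Moreover, the expansion of $s'_{n+1}$ contains the term $s_{n+1}$ (via $f_0\cdot s_{n+1}\cdot f^{-1}_0$), and the hypotheses place no restriction whatsoever on $s_{n+1}$; so $s'_{n+1}$ need not vanish. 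Drop the $i=n+1$ case entirely.
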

\begin{proof}
The composition $s':C(V)\to V$ takes the form
\[
C(V)\to C(C(C(V)))\xrightarrow{f^{-1}}C(C(V))\xrightarrow{s}C(V)\xrightarrow{f}V,
\]
where the first map is induced by the decomposition of $C$.
As in Lemma~\ref{lemma: M' has right shape}, the vanishing conditions for the degrees of $C$ in which the maps $f$ and $f^{-1}$ are supported ($0$ or at least $n$) and the fact that $f_0=f^{-1}_0=\id_V$ imply that $s'_i=s_i$ for $i<n$.
For $s'_n$, we have the schematic equation
\[
s'_n = f_n\compositionproduct s_0 + s_n + s_0\compositionproduct f^{-1}_n
\]
and so acting on the homogeneous component of $C_n(V)$ of total homological degree $n+N$, we have the equality
\[
s'_n=
\frac{\alpha^{N}}{\alpha^{N+n}-\alpha^{N}}s_n + s_n - \frac{\alpha^{N+n}}{\alpha^{N+n}-\alpha^{N}}s_n=0,
\]
as desired.
\end{proof}
This concludes the proof of Lemma~\ref{lemma: key lemma}.
\subsection{Using the key lemma}
As the output of the lemma yields the input data with an increased index $n$, we can recursively arrive at the following.
\begin{equation}
\label{eq:transfinite sequence}
(V,m^{[1]},s^{[1]})\xrightarrow{f^{[1,2]}}(V,m^{[2]},s^{[2]})\xrightarrow{f^{[2,3]}} \cdots\xrightarrow{f^{[n-1,n]}} (V,m^{[n]},s^{[n]})\xrightarrow{f^{[n,n+1]}}\cdots
\end{equation}
where 
\begin{enumerate}
\item the data $m^{[j]}$ is a $P_\infty$ structure on $V$ which vanishes on $C_i(V)$ for $i=0$ and $i$ in the range $2\le i< j+1$,
\item the data $s^{[j]}$ is a $P_\infty$-automorphism of $(V,m^{[j]})$ which is equal to $\sigma_{\alpha}$ on $V$ and vanishes on $C_i(V)$ in the range $1\le i<j$, and
\item the data $f^{[j,j+1]}$ is a $P_\infty$-isomorphism from $(V,m^{[j]})$ to $(V,m^{[j+1]})$ intertwining $s^{[j]}$ and $s^{[j+1]}$ which is equal to the identity on $V$ and vanishes on $C_i(V)$ for $i\notin \{0,j\}$.
\end{enumerate}
We can continue this procedure up to the smallest $n$ such that $\alpha^n-1$ is not a unit. If $\alpha^n-1$ is always a unit, then~\eqref{eq:transfinite sequence} is an infinite sequence.
\begin{lemma}
\label{lemma: transfinite well defined}
If $\alpha^n-1$ is a unit for all $n$, the transfinite composition 
\[f^{[1,\omega]}\coloneqq\cdots \circ f^{[n,n+1]}\circ f^{[n-1,n]}\circ\cdots \circ f^{[1,2]}\] is well-defined.
In particular, the component $f^{[1,\omega]}_n:C_n(V)\to V$ is equal to the $C_n(V)\to V$ component of the finite composite
\[
f^{[n,n+1]}\circ f^{[n-1,n]}\circ\cdots \circ f^{[1,2]}.
\]
\end{lemma}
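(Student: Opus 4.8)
The plan is to reduce the claim to a homological-degree count that exploits the special shape of the maps $f^{[j,j+1]}$ built in Lemma~\ref{lemma: key lemma}: each one equals $\id_V$ on $C_0(V)$ and vanishes on $C_i(V)$ for $i\notin\{0,j\}$. First I would recall that a $C$-coalgebra endomorphism $F$ of the cofree coalgebra $C(V)$ is determined by its projection $f\colon C(V)\to V$ to the cogenerators, and that under this dictionary the composite $G\circ F$ corresponds to the schematic composite $g\compositionproduct f$, namely $C(V)\xrightarrow{\Delta}C(C(V))\xrightarrow{f}C(V)\xrightarrow{g}V$. Hence to make sense of the transfinite composition it suffices to produce a single linear map $f^{[1,\omega]}\colon C(V)\to V$, equivalently to specify its components $f^{[1,\omega]}_n\colon C_n(V)\to V$ in a consistent way; the coalgebra map $F^{[1,\omega]}$ is then its cofree extension, and the content of the lemma is exactly that these components are forced, being the stable values of the components of the finite partial composites.

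The heart of the argument is the following stabilization. For $N\ge 1$ put $G^{(N)}\coloneqq F^{[N,N+1]}\circ\cdots\circ F^{[1,2]}$, with cogenerator projection $g^{(N)}\colon C(V)\to V$. I would show by induction on $N$ that $g^{(N)}_n=g^{(n)}_n$ whenever $N\ge n$. For the inductive step one has $g^{(N+1)}=f^{[N+1,N+2]}\compositionproduct g^{(N)}$, so on $C_n(V)$ one applies $\Delta$, then $g^{(N)}$ on the inner $C$-factors, then the appropriate component of $f^{[N+1,N+2]}$ on the outer $C$-factor. Since $\Delta$ preserves homological degree, the outer factor has some homological degree $a$ and the inner factors degrees $b_1,\dots,b_\ell$ with $a+b_1+\cdots+b_\ell=n\le N<N+1$; the component $f^{[N+1,N+2]}_a$ vanishes unless $a\in\{0,N+1\}$, hence unless $a=0$, and because $C$ is reduced, $a=0$ forces $\ell=1$, $b_1=n$, and $f^{[N+1,N+2]}_0=\id_V$. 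The counit axiom for $C$ identifies the $a=0$ part of $\Delta$ with the identity, so the surviving contribution is exactly $g^{(N)}_n$, giving $g^{(N+1)}_n=g^{(N)}_n$. Setting $f^{[1,\omega]}_n\coloneqq g^{(n)}_n$ then produces a well-defined map whose $C_n(V)$-component is, by construction, that of the finite composite $f^{[n,n+1]}\circ\cdots\circ f^{[1,2]}$, which is the displayed formula.

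The only step that needs real care is the bookkeeping just described: enumerating which summands of $(C\compositionproduct C)(V)$ survive the three constraints (degree-preservation of $\Delta$, vanishing of $f^{[N+1,N+2]}$ off $C_0(V)$ and $C_{N+1}(V)$, and reducedness forcing $C_0=I$), and invoking the counit to recognize the residue. Everything else is routine, and in particular the coderivations $M^{[j]}$ never enter the computation of $g^{(N)}_n$, so the intermediate $P_\infty$-structures are irrelevant here. It is worth noting separately that, once this lemma is in hand, $F^{[1,\omega]}$ is moreover an isomorphism---its $C_0(V)$-component is $\id_V$---and a $P_\infty$-morphism from $(V,m^{[1]})$ to the componentwise-stable limiting structure $m^{[\omega]}$ (the one with $m^{[\omega]}_i=0$ for $i\ne1$); these facts are checked one homological degree at a time, where each reduces to the corresponding finite statement already proved, but they belong to the application of the lemma rather than to its statement.
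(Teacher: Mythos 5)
Your argument is correct and is essentially the paper's own proof, just written out in more detail: the paper likewise observes that since $f^{[m,m+1]}_0=\id_V$ and all other components of index less than $m$ vanish, post-composing with $F^{[m,m+1]}$ leaves the components $C_n(V)\to V$ unchanged for $n<m$, so the components stabilize. One tiny quibble: the fact that an outer factor of homological degree $0$ forces a single inner factor comes from $C_0=I$, which follows from the coaugmentation coideal being concentrated in strictly positive degree (Assumption~\ref{assumption: setup}) rather than from reducedness per se.
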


\begin{proof}
Because the $0$th component of $f^{[m,m+1]}$ is the identity and all other components with index less than $m$ are $0$, the composition of $F^{[m,m+1]}$ with an arbitrary $P_\infty$ map $G:W\to V$ has components $C_n(W)\to V$ equal to those of $G$ for all indices $n$ less than $m$.
\end{proof}
\begin{remark}
We were not able to find a comprehensible closed form expression for the coefficients involved in the transfinite composition, which seem to involve a summation over some special classes of decorated trees.
\end{remark}

Now we are ready to prove the main theorem.

\begin{proof}[Proof of Main Theorem]
By the homological perturbation lemma (Theorem~\ref{theorem:HPL}) there is a $P_\infty$-structure $m^{[1]}$ on $H(A,d)$, along with $P_\infty$ quasi-inverses 
\[
\iota:(H(A,d),m^{[1]})\rightleftarrows (A,d,m):\pi.
\]
Since these are quasi-inverses the composition $\pi\iota$ is homotopic to the identity of $H(A,d)$. But since $H(A,d)$ has no differential, this means $\pi\iota$ is equal to the identity. 
Define $s^{[1]}$ as the composition of the $P_\infty$ morphisms $\pi$, $\hat{\sigma}$, and $\iota$; then $s^{[1]}$ is automatically a $P_\infty$ automorphism of $(H(A,d), m^{[1]})$.
Its zeroth component is given by $\pi_0\hat{\sigma}\iota_0$. 
Since $\iota$ is a $P_\infty$-morphism, in particular $\iota_0$ lands in the cycles of $A$ with respect to $d$, and then up to boundary terms, $\hat{\sigma}\iota_0=\iota_0\sigma$. But then $\pi_0$ kills boundary terms so that 
\[
\pi_0\hat{\sigma}\iota_0=\pi_0\iota_0\sigma=\sigma.
\]
Then $m^{[1]}$ and $s^{[1]}$ are precisely the input data necessary for Lemma~\ref{lemma: key lemma} and Lemma~\ref{lemma: transfinite well defined}. If $\alpha^k-1$ is a unit for all $k\leq n$, then the finite composition
\[
f^{[1,n]}:= f^{[n-1,n]}\circ\cdots \circ f^{[1,2]}
\]
is an isomorphism between $m^{[1]}$ and a $P_\infty$-structure whose first $n$ components vanish. If $\alpha^k-1$ is always a unit, the transfinite composition $f^{[1,\omega]}$ constructed in Lemma~\ref{lemma: transfinite well defined} is an isomorphism between $m^{[1]}$ and the $P_\infty$-structure $m_*$.
\end{proof}

\begin{remark}
\label{remark: generalization of homotopy retract}
This proof relies on the following consequences of the homological perturbation lemma for the base case:
\begin{enumerate}
	\item the chain complex $(H(A,d),0)$ supports a $P_\infty$-algebra structure $m^{[1]}$ with first component which is induced by $m$ along with a quasi-isomorphism $(H(A,d),0,m^{[1]})\to (A,d,m)$, and
	\item the $P_\infty$-algebra $(H(A,d),0,m^{[1]})$ supports a $P_\infty$-automorphism $s^{[1]}$ with first component $\sigma$ (induced by $\hat{\sigma}$).
\end{enumerate}
Any hypotheses guaranteeing these two conditions is sufficient for the argument. 
\end{remark}

\begin{remark}
Bruno Vallette has pointed out to us that the core of the argument presented in this section is more or less a computation in the convolution preLie algebra $\operatorname{Hom}_{\mathbb{S}}(C,\operatorname{End}_{H(A,d)})$ (see, e.g.,~\cite[\S~6.4.2]{LodayVallette:AO} and~\cite[\S~5]{DotsenkoShadrinVallette:PLDT}). 
It looks like an interesting and possibly illuminating exercise to rewrite the proof using this observation. 
\end{remark}

\section{A variant of the main theorem}
\label{section: variant theorem}

We are interested in examples where the homology of the $P$-algebra $A$ is concentrated in degree divisible by $c$ for some integer $c$ (an example is given by the cohomology of $\mathbb{CP}^n$ which is concentrated in even degrees). In such a situation we can ``ignore'' the zero cohomology groups and our main theorem becomes the following.

\begin{theorem}\label{theo: variant}
Let $(A,d,m)$ be a differential graded $P$-algebra in $\groundring$-modules such that the chain complex $(H(A,d),0)$ can be written as a homotopy retract of $(A,d)$. 
Assume further that the homology $H(A,d)$ is concentrated in degrees divisible by $c$.
Let $\alpha$ be a unit in $\groundring$ and let $\hat{\sigma}$ be an endomorphism of $(A,d,m)$ such that the induced map on $H_{cn}(A,d)$ is multiplication by $\alpha^n$. 
\begin{itemize}
\item If $\alpha^k-1$ is a unit of $\groundring$ for $k\leq n$, then $(A,d,m)$ is $cn$-formal as a $P$-algebra.
\item If $\alpha^k-1$ is a unit of $\groundring$ for all $k$, then $(A,d,m)$ is formal as a $P$-algebra.
\end{itemize}
\end{theorem}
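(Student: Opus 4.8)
The plan is to rerun the induction of Section~\ref{section: induction} almost verbatim, after observing that when $H_*(A,d)$ is concentrated in degrees divisible by $c$ all but every $c$th stage of the induction becomes vacuous. First, exactly as in the proof of the Main Theorem, apply the transfer theorem (Theorem~\ref{theorem:HPL}) to obtain a transferred $P_\infty$-structure $m^{[1]}$ on $H_*(A,d)$, $P_\infty$-quasi-inverses $\iota\colon(H_*(A,d),m^{[1]})\rightleftarrows(A,d,m)\colon\pi$ with $\pi\iota=\id$, and the $P_\infty$-automorphism $s^{[1]}=\pi\hat\sigma\iota$ of $(H_*(A,d),m^{[1]})$. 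As in that proof, the zeroth component of $s^{[1]}$ is the endomorphism $\sigma$ of $H_*(A,d)$ induced by $\hat\sigma$, which by hypothesis is multiplication by $\alpha^{j}$ on $H_{cj}(A,d)$. Note that $\sigma$ is \emph{not} the degree twisting $\sigma_\alpha$ of Definition~\ref{defi: degree twisting} unless $c=1$, so the Main Theorem does not apply directly and the inductive step must be reworked.

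Second comes the key degree count. Since $H_*(A,d)$ lives in degrees divisible by $c$ and the component $C_i$ is concentrated in homological degree $i$, a homogeneous element of $C_i(H_*(A,d))$ of total degree $i+N$ has $c\mid N$. Hence for the cogenerator projection of any $P_\infty$-structure on $H_*(A,d)$ --- a degree $-1$ map $C(H_*(A,d))\to H_*(A,d)$ --- the $i$th component vanishes unless $i\equiv 1\pmod c$, and for the cogenerator projection of any $P_\infty$-morphism --- a degree $0$ map --- the $i$th component vanishes unless $i\equiv 0\pmod c$. This applies to $m^{[1]}$, to $s^{[1]}$, and to every structure, automorphism and morphism produced along the induction.

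Third, I would reprove Lemma~\ref{lemma: key lemma} in this setting, with $s$ having zeroth component $\sigma$ in place of $\sigma_\alpha$. If the stage index $n$ is not divisible by $c$ the step is trivial: $s_n=0$ (as $n\not\equiv 0$), the component $m_{n+1}$ that would be killed is already $0$ (as $n+1\not\equiv 1$), and one takes $m'=m$, $s'=s$, and $f$ the identity. If $n=kc$, one repeats Definition~\ref{defi: f} and the subsequent constructions and lemmas of Section~\ref{section: induction} verbatim, with the eigenvalues $\alpha^{N}$, $\alpha^{N+n}$, $\alpha^{n}$ of $\sigma_\alpha$ replaced throughout by the eigenvalues $\alpha^{N/c}$, $\alpha^{(N+n)/c}$, $\alpha^{k}$ of $\sigma$ (all the relevant exponents are divisible by $c$, by the degree count above). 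In particular, on the homogeneous degree $n+N$ component of $C_n(V)$ one sets
\[
f_n\coloneqq\frac{1}{\alpha^{(N+n)/c}-\alpha^{N/c}}\,s_n=\frac{1}{\alpha^{N/c}(\alpha^{k}-1)}\,s_n,
\]
which is well defined precisely because $\alpha^{k}-1$ is a unit, and the identity $s_n\sigma^{-1}=(\alpha^{k}-1)f_n$ used in the analogue of Lemma~\ref{lemma: M' has right shape} lets one divide by $1-\alpha^{k}$ just as in the original argument. \textbf{Verifying this substitution is the only real point}: one must check that the eigenvalues of $s_0$ enter the original proof only through these coefficients, and only in combinations that reduce to $\alpha^{k}-1$.

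Finally, I would assemble the pieces exactly as after Lemma~\ref{lemma: key lemma}. Starting from $(H_*(A,d),m^{[1]},s^{[1]})$, the nontrivial steps occur at the stages $n=c,2c,3c,\dots$, the one at stage $kc$ requiring only that $\alpha^{k}-1$ be a unit. So if $\alpha^{k}-1$ is a unit for all $k\le n$ we may run the induction through stage $cn$ and, exactly as in the proof of the Main Theorem, obtain an isomorphism from $m^{[1]}$ to a $P_\infty$-structure whose components vanish through degree $cn$, which is $cn$-formality; if $\alpha^{k}-1$ is a unit for all $k$, the transfinite composition of Lemma~\ref{lemma: transfinite well defined} is defined and carries $m^{[1]}$ to $m_*$, giving formality. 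One could also phrase the whole argument as an application of the Main Theorem after regrading $H_*(A,d)$ so that $\sigma$ becomes a degree twisting, but since the cooperad $C$ cannot be regraded correspondingly this reduces to exactly the same degree bookkeeping and offers no real shortcut.
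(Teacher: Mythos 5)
Your proposal is correct and takes essentially the same approach as the paper: the paper's proof of Theorem~\ref{theo: variant} consists of the one-line instruction to adapt Lemma~\ref{lemma: key lemma} ``in an obvious manner'' (noting also the special case where $\alpha$ has a $c$-th root), and you carry out exactly that adaptation, supplying more detail than the paper does (the mod-$c$ vanishing of components, the vacuity of stages $n\not\equiv 0\pmod c$, and the replacement of the eigenvalue coefficients by $\alpha^{N/c}(\alpha^k-1)$).
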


Observe that if $\alpha$ has a $c$-th root in our ring $\groundring$, then this theorem is our main theorem with $\alpha$ replaced by $(\alpha)^{1/c}$. In general this variant is proved by adapting the proof of Lemma \ref{lemma: key lemma} in an obvious manner.

\section{Applications}
\label{sec: applications}

\subsection{Hodge theory}\label{subsection:Hodge}

Let us reinterpret the classical Deligne--Griffiths--Morgan--Sullivan~\cite{DeligneGriffithsMorganSullivan:RHTKM} result in the light of the present paper.  
This has already been done indirectly by Sullivan~\cite[\S12]{Sullivan:ICT}. 
There he argues that complex formality of K\"ahler manifolds (proven by other means in~\cite{DeligneGriffithsMorganSullivan:RHTKM}) implies that degree twisting automorphisms lift to the chain level. 
Then rational degree twisting automorphisms lift to the chain level, and therefore the rational homotopy type of a K\"ahler manifold is formal.

We outline a more direct proof along the same lines, sketching a way to obtain real-valued chain level lifts of the degree twisting automorphism. Arguably this is not the most natural way to prove such results so we do not provide full details. We denote by $\mathrm{MHS}$ the abelian category of rational mixed Hodge structures. An object of $\mathrm{MHS}$ is a triple $(H,W,F)$ where $H$ is a $\mathbb{Q}$-vector space, $W$ is an increasing filtration on $H$ (the weight filtration) and $F$ is a decreasing filtration on $H\otimes_{\mathbb{Q}}\mathbb{C}$ (the Hodge filtration). We make the following conjecture.

\begin{conj}\label{conjecture : Hodge model}
Let $X$ be a complex algebraic variety. There exists a commutative algebra $A^*(X)$ in the category of cochain complexes in $\mathrm{MHS}$ which represents the rational homotopy type of $X$ and such that the induced weight filtration on $H^*(A(X))\cong H^*(X,\mathbb{Q})$ is the weight filtration of the mixed Hodge structure constructed by Deligne in \cite{Deligne:THII, Deligne:THIII}.
\end{conj}

Let us make precise what we mean by ``represents the rational homotopy type''. There is a forgetful symmetric monoidal functor from the category $\mathrm{MHS}$ to the category of rational vector spaces. We can apply this functor to the model $A^*(X)$ and we obtain a commutative differential graded algebra over $\mathbb{Q}$ and we require that the resulting object is quasi-isomorphic to Sullivan's commutative differential graded algebra of piecewise polynomial differential forms.

Let us give some ideas on how one should be able to prove such a conjecture. In \cite{CiriciHorel:MHSFSMF}, Cirici and the second author explained how one can view a model for the singular cochains of $X$ as a commutative algebra in the $\infty$-category of chain complexes of real mixed Hodge structures. Then we believe that a rigidification result similar to the one proved in Hinich \cite{Hinich:RAM} should be true in that context and we can actually represent this commutative algebra in the $\infty$-category by a strict commutative algebra.

We assume that this conjecture is correct until the end of the subsection. The abelian category of real mixed Hodge structure is a Tannakian category. As a fiber functor we can take the functor
\[(H,F,W)\mapsto \oplus_n\mathrm{gr}_n^W(H)\]
that sends a mixed Hodge structure to the associated graded of the weight filtration. This functor is isomorphic to the forgetful functor $(H,F,W)\mapsto H$. An explicit isomorphism was given by Deligne over the real numbers in \cite[1.2.11]{Deligne:THII}. It was observed in \cite[Lemma 4.4.]{CiriciHorel:MHSFSMF} that, for abstract reasons, there must exist such an isomorphism over $\mathbb{Q}$ as well, although it is not explicit. By definition, this fiber functor factors through the Tannakian category of graded vector spaces. Therefore, by Tannaka duality, if we denote by $\mathrm{G}_{\mathrm{MHS}}$ the Tannakian Galois group of mixed Hodge structure, there exists a maps of affine group schemes over $\mathbb{Q}$
\[i:\mathbb{G}_m\to \mathrm{G}_{\mathrm{MHS}}\]
We refer the reader to~\cite{Goncharov:HHSHC} for more details about this. According to our conjecture, there is an action of $\mathrm{G}_{\mathrm{MHS}}(\mathbb{Q})$ on our model $A^*(X)$ of the rational homotopy type of $X$. We can restrict this action along $i$ and we get an action of $\mathbb{Q}^{\times}=\mathbb{G}_m(\mathbb{Q})$ on $A^*(X)$. We have the following fact about this action.

\begin{prop}
Let $X$ be an algebraic variety over $\mathbb{C}$. If $H^k(X,\mathbb{R})$ is a cohomology group of $X$ whose mixed Hodge structure is pure of weight $n$, then the action of $x\in\mathbb{Q}^{\times}$ on $H^k(X)$ is given by multiplication by $x^n$.
\end{prop}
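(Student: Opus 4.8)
The plan is to reduce the statement to an elementary computation with pure Hodge structures, using crucially that $H^k(X,\mathbb{R})$ is assumed pure of weight $n$. First I would recall the relevant output of the construction of~\cite{CiriciHorel:MHSFSMF}: the model $A^*(X)$ carries an action of $\mathrm{G}_{\mathrm{MHS}}(\mathbb{R})$ which, on passing to cohomology, induces the \emph{tautological} action of $\mathrm{G}_{\mathrm{MHS}}$ on the mixed Hodge structure carried by $H^k(X,\mathbb{R})=H^k(A^*(X))$. Restricting this action along the splitting $s\colon \mathrm{G}_{\mathrm{PHS}}\to\mathrm{G}_{\mathrm{MHS}}$ produces the action of $\mathbb{C}^\times=\mathrm{G}_{\mathrm{PHS}}(\mathbb{R})$ whose restriction to $H^k(X,\mathbb{R})$ is what we must compute, and whose further restriction to $\mathbb{R}^\times$ is the object of the proposition.

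The key reduction is that $s$ is Tannaka dual to the functor $\mathrm{gr}^W$ of passing to the associated graded of the weight filtration. Hence, for any mixed Hodge structure $M$, the $\mathrm{G}_{\mathrm{PHS}}$-action obtained by restriction along $s$ is the canonical action of $\mathrm{G}_{\mathrm{PHS}}$ on the pure Hodge structure $\mathrm{gr}^W M=\bigoplus_m \mathrm{gr}^W_m M$. When $M=H^k(X,\mathbb{R})$ is pure of weight $n$ one has $\mathrm{gr}^W M = M$, placed in weight $n$, so the action in question is nothing but the canonical action of $\mathrm{G}_{\mathrm{PHS}}$ on the weight-$n$ pure Hodge structure $H^k(X,\mathbb{R})$. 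This is exactly where purity enters, and at this point the cohomological degree $k$ has dropped out: only the weight $n$ survives.

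It then remains to evaluate the action of $x\in\mathbb{R}^\times\subset\mathbb{C}^\times=\mathrm{G}_{\mathrm{PHS}}(\mathbb{R})$ on a pure real Hodge structure $V$ of weight $n$. I would identify $\mathrm{G}_{\mathrm{PHS}}$ with the Deligne torus $\mathbb{S}=\mathrm{Res}_{\mathbb{C}/\mathbb{R}}\mathbb{G}_m$, so that $\mathrm{G}_{\mathrm{PHS}}(\mathbb{R})=\mathbb{C}^\times$ and $V$ is an $\mathbb{S}$-representation whose complexification decomposes as $V_{\mathbb{C}}=\bigoplus_{p+q=n}V^{p,q}$. With the normalization of~\cite{CiriciHorel:MHSFSMF,Goncharov:HHSHC}, $z\in\mathbb{C}^\times$ acts on $V^{p,q}$ by $z^{2p}\bar z^{2q}$; equivalently, the restriction of the $\mathbb{S}$-action to the real subtorus $\mathbb{G}_{m,\mathbb{R}}\hookrightarrow\mathbb{S}$ (which on $\mathbb{R}$-points is the inclusion $\mathbb{R}^\times\hookrightarrow\mathbb{C}^\times$) is $\otimes$-compatible in the weight, hence determined by its value on a weight-one structure, and on a weight-$n$ structure it is $t\mapsto t^{2n}$. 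In any case, specializing to $z=x$ real, each summand $V^{p,q}$ is acted on by the scalar $x^{2p}x^{2q}=x^{2(p+q)}=x^{2n}=|x|^{2n}$, independently of $(p,q)$; so $x$ acts on all of $V=H^k(X,\mathbb{R})$ by the single scalar $|x|^{2n}$, as claimed. (In particular this is what makes the action a scalar in the first place.)

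The last step is a routine linear-algebra verification. The one point requiring genuine care is the normalization in the previous paragraph: the relation between the motivic weight $n$ and the weight of the cocharacter $\mathbb{R}^\times\hookrightarrow\mathbb{C}^\times$ involves the familiar factor of $2$ coming from Tate-twist bookkeeping (for instance $\mathbb{R}(1)$ has weight $-2$), and pinning it down amounts to unwinding how the Tannakian $\mathbb{S}$-action on real mixed Hodge structures is set up in~\cite{CiriciHorel:MHSFSMF} following~\cite{Goncharov:HHSHC}. Granting that, and the compatibility recalled in the first paragraph (that the $\mathrm{G}_{\mathrm{MHS}}(\mathbb{R})$-action on the model $A^*(X)$ realizes on cohomology the tautological action on the mixed Hodge structure of $H^*(X,\mathbb{R})$, which is itself part of the construction of~\cite{CiriciHorel:MHSFSMF}), the proposition follows immediately. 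Thus the whole argument is essentially a repackaging of standard facts, organized around the observation that purity collapses the $\mathrm{G}_{\mathrm{MHS}}$-action on $H^k$ to the $\mathrm{G}_{\mathrm{PHS}}$-action on a pure weight-$n$ Hodge structure, on which the real torus visibly acts by $|x|^{2n}$.
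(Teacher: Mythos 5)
Your argument is correct in structure, but it is worth knowing that the paper does not actually prove this proposition: its entire ``proof'' is a citation to \cite[Paragraph 2.31, p.~145]{DeligneMilneOgusShih:HCMSV}. What you have written is essentially an unfolding of what that citation contains, organized exactly as the paper's surrounding discussion suggests: restriction along the splitting $s$ is Tannaka dual to $\mathrm{gr}^W$, so the $\mathbb{C}^\times$-action on $H^k(X,\mathbb{R})$ is the canonical $\mathrm{G}_{\mathrm{PHS}}$-action on $\mathrm{gr}^W H^k(X,\mathbb{R})$, which by purity is $H^k(X,\mathbb{R})$ itself placed in weight $n$; then one evaluates the real points of the Deligne torus on a weight-$n$ pure Hodge structure. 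The one genuinely delicate point is the one you yourself flag: the normalization producing the exponent $2n$ rather than $n$ (equivalently, whether $x\in\mathbb{R}^\times$ acts on the weight-$n$ piece through the norm character $x\mapsto(x\bar x)^n=|x|^{2n}$ or through $x\mapsto x^n$). This is precisely the content of the DMOS paragraph the paper cites, so your deferral there is legitimate, but in a self-contained write-up you would need to fix the convention for the $\mathbb{S}$-action on $V^{p,q}$ once and for all and check it against the Tate twist ($\mathbb{R}(1)$ of weight $-2$), since getting it wrong by the factor of $2$ would change the unit $\alpha$ fed into the Main Theorem. Modulo that bookkeeping, your proof is complete and is, if anything, more informative than the paper's.
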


\begin{proof}
This can be found in \cite[Paragraph 2.1.5.1, p.25]{Deligne:THII}.
\end{proof}

Hence, applying our main result to the commutative algebra over the rational numbers $A^*(X)$, we can prove the following theorem.

\begin{theorem}
Let $X$ be a smooth projective complex variety, or more generally a variety satisfying the property that $H^k(X,\mathbb{R})$ is a pure Hodge structure of weight $k$ for all $k$. Then there exists a model $A^*(X)$ for the rational homotopy type of $X$ that is formal.
\end{theorem}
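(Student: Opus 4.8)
The plan is to recognize this as a direct application of the Main Theorem (or more precisely its variant, Theorem~\ref{theo: variant}) to the real homotopy type equipped with the $\mathbb{C}^\times$-action constructed above. First I would fix the model $A^*(X)$ carrying the action of $\mathrm{G}_{\mathrm{MHS}}(\mathbb{R})$, restrict along the splitting $s$ to get a $\mathbb{C}^\times$-action, and then restrict further to the subgroup $\mathbb{R}^\times \subset \mathbb{C}^\times$. This makes $A^*(X)$ into a commutative dg-algebra over $\mathbb{R}$ equipped with, for each $x \in \mathbb{R}^\times$, an automorphism $\hat\sigma_x$ of commutative dg-algebras.

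Next I would compute the induced action on cohomology. Under the purity hypothesis, $H^k(X,\mathbb{R})$ is pure of weight $k$, so by the Proposition cited above (from Deligne--Milne--Ogus--Shih) the element $x \in \mathbb{R}^\times$ acts on $H^k(X,\mathbb{R})$ by multiplication by $|x|^{2k}$. Now I would choose $x$ so that $\alpha := |x|^2$ is a real number with the property that $\alpha^m - 1 \neq 0$ for all $m \geq 1$, which is easy: any $x$ with $|x| \neq 1$ and $|x|$ not a root of unity works, and since we are over $\mathbb{R}$ we may simply take, e.g., $x = 2$, so $\alpha = 4$, and then $\alpha^m - 1 = 4^m - 1$ is a nonzero real number, hence a unit in $\mathbb{R}$. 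This verifies the hypothesis ``$\alpha^k - 1$ is a unit of $\groundring$ for all $k$'' with $\groundring = \mathbb{R}$.

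Then I would apply the Main Theorem with $P$ the commutative operad (noting that $\mathbb{R}$ is a $\mathbb{Q}$-algebra, so symmetric operads are allowed) and $\hat\sigma = \hat\sigma_x$: the induced map on $H^*(X,\mathbb{R})$ is multiplication by $\alpha^k = |x|^{2k}$ in cohomological degree $k$, which is exactly the degree twisting by $\alpha$ after the standard sign/index bookkeeping converting cohomological grading to homological grading (Definition~\ref{defi: degree twisting}). The one hypothesis still to be checked is that $(H_*(A^*(X)),0)$ is a homotopy retract of $(A^*(X),d)$; this is automatic because we are working over the field $\mathbb{R}$, as noted in the discussion following Theorem~\ref{theorem:HPL}. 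The Main Theorem then yields that $A^*(X)$ is formal as a commutative dg-algebra over $\mathbb{R}$, i.e., the real homotopy type of $X$ is formal.

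The main obstacle, and the only genuinely nontrivial input, is the construction of the $\mathrm{G}_{\mathrm{MHS}}(\mathbb{R})$-action on a commutative dg-model $A^*(X)$ of the real homotopy type — this is exactly the ``non-trivial homotopical machinery'' referred to above and is carried out in~\cite{CiriciHorel:MHSFSMF}; here I would simply cite it rather than reproduce it. Everything downstream of that — restricting the action, identifying the induced cohomological action via purity, choosing $\alpha$, and invoking the Main Theorem — is formal. I would remark that this route is not the most efficient one (hence ``we do not provide full details''): one could equally well extract a real chain-level lift of the degree twisting automorphism directly and feed it into the same machine, which is the spirit of Sullivan's~\cite[\S12]{Sullivan:ICT} argument.
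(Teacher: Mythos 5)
Your proposal is correct and follows essentially the same route as the paper: restrict the $\mathrm{G}_{\mathrm{MHS}}(\mathbb{R})$-action along the splitting $s$ to get a $\mathbb{C}^\times$-action, use the purity hypothesis together with the cited proposition to see that $x\in\mathbb{R}^\times$ acts on $H^k$ by $|x|^{2k}$, choose $x$ with $|x|^2=\alpha$ satisfying $\alpha^m-1\neq 0$ for all $m$, and invoke the Main Theorem over the field $\mathbb{R}$. The paper deliberately leaves this as a sketch (the hard input being the construction of the action from~\cite{CiriciHorel:MHSFSMF}), and your write-up fills in exactly the formal steps the paper alludes to.
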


It should be noted that the approach outlined in this section is dependent on Conjecture \ref{conjecture : Hodge model}. However the result is true regardless of this conjecture as was proved, in various degrees of generality, in \cite{DeligneGriffithsMorganSullivan:RHTKM,dupont:PFAC,CiriciHorel:MHSFSMF}. It should also be noted that if instead the variety $X$ has the property that $H^k(X,\mathbb{R})$ is a pure Hodge structure of weight $\alpha k$ for all $k$, where $\alpha$ is a fixed non-zero rational number, then, using Theorem \ref{theo: variant}, we also get a formality result that recovers the one of \cite{CiriciHorel:MHSFSMF}. 

\subsection{Formality of the little disks operad}

Following Petersen, we can prove formality of the little disks operad with rational coefficients. Let us denote by $\mathcal{D}_2$ the topological little disks operad. Petersen observes that for any $\alpha$ in $\mathbb{Q}^{\times}$, there exists an automorphism of $C_*(\mathcal{D}_2,\mathbb{Q})$ that induces the grading automorphism $\sigma_{\alpha}$ on the homology (see the proof of the main Theorem of \cite{Petersen:GTFLD}). We can give an alternative proof of the Proposition in \cite{Petersen:GTFLD} using our Main Theorem. In that case the ring of coefficient is $\mathbb{Q}$ and the operad we consider is the colored operad that controls the structure of a single colored operad. Note that this proof is closer in spirit to the intuition developed in the last section of \cite{Petersen:GTFLD}.

\subsection{Complement of subspace arrangements}

In this subsection, we denote by $K$ a finite extension of $\QQ_p$. The residue field of the ring of integers of $K$ is isomorphic to $\mathbb{F}_q$ for $q$ some power of $p$. We denote by $\ell$ a prime number different from $p$ and we denote by $h$ the order of $q$ in $\FF_{\ell}^{\times}$. 

For us, a complement of a hyperplane arrangement over a field $L$ is the complement of a finite collection of affine hyperplanes in $\mathbb{A}^n_L$ viewed as a scheme over $L$. We say that a complement of hyperplane arrangements $X$ over the complex numbers is defined over $K$ if there exists an embedding $\iota:K\to\mathbb{C}$ and a complement of a hyperplane arrangement over $K$ denoted $\mathcal{X}$ such that $\mathcal{X}\times_K\mathbb{C}$ is isomorphic to $X$.

\begin{theorem}\label{theo : complement of hyperplane arrangements}
Let $X$ be a complement of a hyperplane arrangement over the complex numbers. Assume that $X$ is defined over $K$. Then the dg-algebra $C^*(X_{an},\ZZ_\ell)$ is $(h-1)$-formal
\end{theorem}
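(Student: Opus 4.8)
The plan is to realize $C^*(X_{an},\ZZ_\ell)$ as a dg-algebra equipped with a degree twisting automorphism coming from a Frobenius, and then invoke the Main Theorem (or rather Theorem~\ref{theo: variant}) with $\groundring=\ZZ_\ell$. First I would use comparison theorems in \'etale cohomology. Since $X$ is defined over $K$ by a hyperplane arrangement, there is a smooth $K$-scheme $\mathcal{X}$ with $\mathcal{X}\times_K\mathbb{C}\cong X$ (as complex varieties, via the fixed embedding $\iota$), and Artin's comparison theorem gives a quasi-isomorphism of $\ZZ_\ell$-dg-algebras between $C^*(X_{an},\ZZ_\ell)$ and a model $B$ for the \'etale cochains $C^*_{et}(\mathcal{X}\times_K\overline{K},\ZZ_\ell)$. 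The key point is that $\mathcal{X}$ is already defined over $K$, so $B$ carries an action of $\mathrm{Gal}(\overline{K}/K)$ through dg-algebra automorphisms.

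Next I would analyze the cohomology groups and the Galois action on them. By the Orlik--Solomon description (or more precisely its weight-theoretic refinement), $H^k_{et}(\mathcal{X}\times_K\overline{K},\ZZ_\ell)$ is pure of weight $2k$: geometric Frobenius at the residue field acts on $H^k$ by $q^k$. Concretely, the cohomology ring is generated in degree $1$ where Frobenius acts by $q$ (coming from the $\mathrm{Tate}$ twist attached to each hyperplane), and the ring structure forces the weight-$2k$ behavior in degree $k$. So picking a lift $\hat\sigma$ of the geometric Frobenius in $\mathrm{Gal}(\overline{K}/K)$, the induced automorphism of $H^*(X_{an},\ZZ_\ell)$ is multiplication by $q^k$ in degree $k$, i.e. the degree twisting $\sigma_q$ with $\alpha=q$ (and here $c=1$, so the variant coincides with the Main Theorem). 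Then I would feed this into the Main Theorem: $(A,d,m)$ is the dg-algebra model $B$, the ground ring is $\ZZ_\ell$, and $\hat\sigma$ induces $\sigma_q$ on homology. The homotopy retract hypothesis is satisfied because $\ZZ_\ell$ is a principal ideal domain (hence hereditary) and the cochain and cohomology modules are free (degreewise finitely generated free $\ZZ_\ell$-modules, since hyperplane-arrangement cohomology is torsion-free), so the discussion following Theorem~\ref{theorem:HPL} applies. The Main Theorem then yields $n$-formality whenever $\alpha^k-1=q^k-1$ is a unit in $\ZZ_\ell$ for all $k\le n$; since $h$ is by definition the order of $q$ in $\FF_\ell^\times$, the reduction mod $\ell$ of $q^k-1$ is nonzero (hence $q^k-1\in\ZZ_\ell^\times$) exactly when $h\nmid k$, so the obstruction first appears at $k=h$ and we get $(h-1)$-formality. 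Finally, transporting formality along the quasi-isomorphism $C^*(X_{an},\ZZ_\ell)\simeq B$ (formality is a property of the object in the homotopy category of dg-algebras) gives the statement.

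The main obstacle is not the formal application of the Main Theorem but the geometric input: producing the dg-algebra model $B$ over which the Galois action is realized by \emph{strict} (degree-zero, chain-level) automorphisms of dg-algebras, and verifying the purity/weight claim over the $p$-adic field $K$ rather than over a finite field. For the first point one uses a functorial model of \'etale cochains (e.g. Godement resolutions, or the $E_\infty$-models of \cite{CiriciHorel:ECPFTC}) to get an honest action of $\mathrm{Gal}(\overline K/K)$, possibly after passing through a cofibrant or rectified model so that the action lands in strict dg-algebra automorphisms of a commutative (or at least associative) model; this is where the earlier work of Cirici--Horel is invoked rather than reproven. For the purity claim, one reduces to a finite field: the arrangement spreads out over the ring of integers $\mathcal{O}_K$ (after inverting finitely many primes), and smooth proper base change plus the Weil conjectures for the special fiber identify the geometric Frobenius eigenvalues, giving weight $2k$ in cohomological degree $k$; equivalently one invokes the known mixed Hodge / weight structure on Orlik--Solomon algebras. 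Once these two ingredients are in place, the rest is a direct citation of Theorem~\ref{theorem:HPL}, the discussion of hereditary rings following it, and the Main Theorem with $\alpha=q$.
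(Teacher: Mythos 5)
Your proposal is correct and follows essentially the same route as the paper: comparison with \'etale cochains of the model $\mathcal{X}$ over $K$, the action of a Frobenius lift by $q^n$ on $H^n$ (which the paper imports from \cite[Lemma 8.10]{CiriciHorel:ECPFTC} together with torsion-freeness to pass from $\QQ_\ell$ to $\ZZ_\ell$), and then a direct application of the Main Theorem with $\alpha=q$, where $q^k-1$ is a unit in $\ZZ_\ell$ precisely for $h\nmid k$. Your additional remarks on strictifying the Galois action and on verifying the homotopy retract hypothesis over the hereditary ring $\ZZ_\ell$ are exactly the points the paper delegates to the cited references and to the discussion following Theorem~\ref{theorem:HPL}.
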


\begin{proof}
Indeed in that case, by standard comparison results in \'etale cohomology, we have a quasi-isomorphism of dg-algebras
\[C^*(X_{an},\ZZ_\ell)\simeq C^*_{\textit{\'et}}(\mathcal{X}_{\overline{K}},\ZZ_\ell)\]
where $\mathcal{X}$ is the complement of a hyperplane arrangement defined over $K$ that exists by assumption. Let $\sigma$ be a Frobenius lift, i.e. an element of $\mathrm{Gal}(\overline{K}/K)$ that maps to a generator of $\mathrm{Gal}(\overline{\FF}_q/\FF_q)$. Then the action of $\sigma$ on $H^n_{\textit{\'et}}(\mathcal{X}_{\overline{K}},\ZZ_\ell)$ is given by multiplication by $q^n$ (see \cite[Theorem 1']{Kim:WCGAHA}). We are thus precisely in the situation of our main theorem. The ring of coefficient is $\mathbb{Z}_\ell$ and the operad $P$ is the associative operad.
\end{proof}

\begin{remark}
This theorem is the codimension $1$ version of \cite[Theorem 8.11]{CiriciHorel:ECPFTC}. Note that, in contrast to \cite{CiriciHorel:ECPFTC}, we do not make the assumption that $X_{an}$ is simply connected. The higher codimension version of \cite[Theorem 8.11]{CiriciHorel:ECPFTC} can also be proved using the methods of the current paper. It should also be noted that we obtain a formality result over $\ZZ_\ell$ and not just over $\FF_\ell$ as in \cite{CiriciHorel:ECPFTC}.
\end{remark}

\begin{remark}
Note that the condition of being defined over $K$ cannot be dropped. Indeed for each prime $\ell$, Matei in \cite{Matei:MPCHA}, gives an example of a hyperplane arrangements in $\mathbb{C}^3$ whose complement has a non-trivial Massey products in $H^2(-,\FF_\ell)$. However, the equations of his hyperplanes involve $\ell$-th roots of unity. If a $p$-adic field $K$ has $\ell$-th roots of unity then the residue field $\FF_q$ must have $\ell$-th roots of unity as well and this implies that $\ell$ divides $q-1$. But in that case $q$ is congruent to $1$ modulo $\ell$ and therefore the previous theorem is an empty statement. This is in sharp contrast with the case of rational coefficients where all complements of hyperplane arrangements are formal. 
\end{remark}

\begin{remark}
On the other hand, if the hyperplane arrangement is defined over $\mathbb{Q}$, then it is defined over $\QQ_p$ for every $p$. We are then free to pick $p$ such that $p$ is of order $(\ell-1)$ in $\FF_\ell^\times$ and we obtain $(\ell-2)$-formality for such arrangements. Example of such arrangements are the $A_n$, $B_n$, $C_n$ and $D_n$ arrangements.
\end{remark}

We also have a similar result for complements of toric arrangements. We first recall the relevant definition. A \emph{character} of $(\mathbb{C}^*)^d$ is an algebraic group homomorphism $(\mathbb{C}^*)^d\to\mathbb{C}^*$. It is straightforward to check that any character is of the form
\[(z_1,\ldots,z_d)\mapsto z_1^{n_1}\ldots z_d^{n_d}\]
with $n_1,\ldots,n_d$ a sequence of integers. Given a character of $(\mathbb{C}^*)^d$ and a non-zero complex number $a$, we denote by $H_{\chi,a}$ the subvariety of $(\mathbb{C}^*)^d$ defined by the equation
\[\chi(z_1,\ldots,z_d)=a\]

\begin{defi}
A complement of a toric arrangement is an open subspace of $(\mathbb{C}^*)^d$ of the form
\[(\mathbb{C}^*)^d-\bigcup_{i=1}^nH_{\chi_i,a_i}\]
where each $\chi_i$ is a character and each $a_i$ is a non-zero complex number.
\end{defi} 

We say that  a complement of a toric arrangement $X$ is defined over $K$ if there exists an embedding $\iota:K\to\mathbb{C}$ such that the coefficient $a$ in the equation of each $H_{\chi,a}$ is in the image of $\iota$. Given such a choice of $\iota$ we can construct a variety $\mathcal{X}$ over $K$ given as the open complement in $\mathbb{G}_m^d$ of the closed subsets $H_{\chi,a}$ (or more precisely $H_{\chi,\tilde{a}}$ where $\tilde{a}$ is the preimage of $a$) and we have
\[X=\mathcal{X}\times_{\mathrm{Spec}(K)}\mathrm{Spec}(\mathbb{C})\]

\begin{prop}
Let $X$ be a complement of a toric arrangement that is defined over $K$. Then the dg-algebra $C^*(X,\ZZ_\ell)$ is $(h-1)$-formal.
\end{prop}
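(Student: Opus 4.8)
The plan is to reduce this proposition to Theorem~\ref{theo : complement of hyperplane arrangements} (or rather its proof) by the same étale-cohomological input, replacing the complement of a hyperplane arrangement by the complement of a toric arrangement. First I would invoke the standard comparison between singular cohomology of the analytification and étale cohomology: since $X = \mathcal{X}\times_{\mathrm{Spec}(K)}\mathrm{Spec}(\mathbb{C})$ for a variety $\mathcal{X}$ over $K$, and $\ZZ_\ell$-étale cohomology is insensitive to extending the base field from $\overline{K}$ to $\mathbb{C}$ (the algebraic closure of $K$ can be embedded in $\mathbb{C}$), we obtain a quasi-isomorphism of dg-algebras $C^*(X,\ZZ_\ell)\simeq C^*_{\textit{\'et}}(\mathcal{X}_{\overline{K}},\ZZ_\ell)$. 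The latter carries an action of $\mathrm{Gal}(\overline{K}/K)$, and I would pick a Frobenius lift $\sigma$, i.e.\ an element mapping to a topological generator of $\mathrm{Gal}(\overline{\FF}_q/\FF_q)$.

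Next I would identify the Frobenius eigenvalues on $H^*_{\textit{\'et}}(\mathcal{X}_{\overline{K}},\ZZ_\ell)$. The key point is that for a complement of a toric arrangement the cohomology is ``of Tate type'' in the appropriate sense: $H^n_{\textit{\'et}}$ is a successive extension of Tate twists $\ZZ_\ell(-n)$, so that geometric Frobenius acts on $H^n$ by multiplication by $q^n$. For complements of hyperplane arrangements this is the content of \cite[Lemma 8.10]{CiriciHorel:ECPFTC}; for toric arrangements the analogous statement should follow from the combinatorial description of the cohomology of toric arrangement complements (e.g.\ via the Leray spectral sequence for the inclusion into the torus, or the De Concini--Procesi / Callegaro--Delucchi presentation), together with the fact that the torus $\mathbb{G}_m^d$ itself has $H^k$ equal to $\wedge^k$ of $H^1=\ZZ_\ell(-1)^{\oplus d}$. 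As in the hyperplane case, I would pass through $\QQ_\ell$-coefficients if convenient and then note that these cohomology groups are torsion-free, so the statement over $\ZZ_\ell$ follows. (If torsion-freeness is not immediate one can argue directly with the Tate-twist filtration over $\ZZ_\ell$.)

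Having established that $\sigma$ acts on $H^n(X,\ZZ_\ell)$ by $q^n$, we are exactly in the situation of the Main Theorem with $\groundring=\ZZ_\ell$, $\alpha=q$, and $\hat\sigma$ the chain-level automorphism induced by $\sigma$ on $C^*_{\textit{\'et}}(\mathcal{X}_{\overline{K}},\ZZ_\ell)$ transported along the comparison quasi-isomorphism. Then $\alpha^k-1=q^k-1$ is a unit in $\ZZ_\ell$ precisely when $\ell\nmid q^k-1$, i.e.\ when $k$ is not a multiple of $h$ (the order of $q$ in $\FF_\ell^\times$); so $q^k-1$ is a unit for all $k\le h-1$, and the Main Theorem yields $(h-1)$-formality of $C^*(X,\ZZ_\ell)$ as a dg-algebra (taking $P$ to be the associative, or commutative-in-the-$E_\infty$-sense, operad governing dg-algebras, exactly as in Theorem~\ref{theo : complement of hyperplane arrangements}).

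The main obstacle is the second step: verifying that Frobenius acts on $H^n_{\textit{\'et}}(\mathcal{X}_{\overline K},\ZZ_\ell)$ by $q^n$, i.e.\ that the cohomology of a toric arrangement complement is built out of Tate twists $\ZZ_\ell(-n)$ in degree $n$. For hyperplane arrangements this is packaged in \cite[Lemma 8.10]{CiriciHorel:ECPFTC} via the Brieskorn/Orlik--Solomon description; for toric arrangements one needs the analogue of that computation, which is available in the literature on toric arrangement cohomology but must be cited or sketched carefully, and one must check it is compatible with Galois descent from $\mathbb{C}$ to $\overline K$ and behaves well integrally. Everything else is a direct invocation of the Main Theorem.
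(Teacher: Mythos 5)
Your proposal is correct and follows essentially the same route as the paper: comparison with étale cohomology, reduction of the Frobenius-eigenvalue claim from $\ZZ_\ell$ to $\QQ_\ell$ via torsion-freeness of the cohomology of toric arrangement complements (the paper cites d'Antonio--Delucchi for this), verification that Frobenius acts by $q^n$ in degree $n$ (the paper adapts Dupont's argument to étale cohomology, one of the options you sketch), and then the Main Theorem with $\alpha=q$. The step you flag as the main obstacle is exactly the one the paper handles by citation rather than a new argument.
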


\begin{proof}
Again, by comparison between \'etale and singular cohomology, it suffices to prove that $C^*_{\textit{\'et}}(\mathcal{X}_{\overline{K}},\ZZ_\ell)$ is formal. We claim that for any choice of Frobenius lift $\sigma$ in $\mathrm{Gal}(\overline{K}/K)$ the action of $\sigma$ on $H^n_{\textit{\'et}}(\mathcal{X}_{\overline{K}},\ZZ_\ell)$ is given by multiplication by $q^n$. Indeed, by \cite{dAntonioDelucchi:MTA} the cohomology of the complement of a toric arrangement is torsion free, it follows that it is enough to prove that $\sigma$  acts on $H^n_{\textit{\'et}}(\mathcal{X}_{\overline{K}},\QQ_\ell)$ by multiplication by $q^n$. The proof of the analogous statement in the Hodge case (instead of the \'etale case) is done in \cite[2.2]{Looijenga:CMM}. We can therefore conclude as in the proof of Theorem \ref{theo : complement of hyperplane arrangements}.
\end{proof}

\begin{remark}
This result about formality of complements of hyperplane arrangements and toric arrangements with coefficients in $\mathbb{Z}_{\ell}$ is new as far as the authors know. Let us mention however, that the paper \cite{CallegaroDAdderrioDelucchiMigliorniPagaria:OSTPCATA} contains a result called integral formality for toric arrangements (see \cite[Theorem 7.4]{CallegaroDAdderrioDelucchiMigliorniPagaria:OSTPCATA}).  We believe that our result and the result of \cite{CallegaroDAdderrioDelucchiMigliorniPagaria:OSTPCATA} are independent. If we denote by $X$ the toric arrangement, our result is about formality of the singular cohomology algebra $C^*(X)$ whereas the result of \cite{CallegaroDAdderrioDelucchiMigliorniPagaria:OSTPCATA} identifies a certain $\mathbb{Z}$-subalgebra of the de Rham algebra of $X$ that is isomorphic to the integral cohomology of $X$. It is classical that the real singular cochains algebra is quasi-isomorphic to the de Rham algebra but such a result cannot be expected to hold integrally because the de Rham algebra is commutative whereas it is known that $C^*(X)$ is merely $E_\infty$ and cannot be strictified to a commutative differential graded algebra (the Steenrod operations are an obstruction to the existence of such a strictification). From the homotopical point of view the $E_\infty$-dg-algebra $C^*(X)$ is the most interesting object as it permits to reconstruct the homotopy type of $X$ by a famous theorem of Mandell (see \cite{Mandell:CHT}). Note however that our formality result is only about the associative differential graded algebra $C^*(X)$ and does not say anything about the $E_\infty$-structure.
\end{remark}

\begin{remark}
Using \'etale cohomology with coefficients in $\QQ_\ell$ instead of $\ZZ_{\ell}$, we can also prove formality (without bound) of cohomology of complements of hyperplane and toric arrangements with $\QQ_\ell$-coefficients. This gives an alternative proof of the results of Brieskorn and Dupont (proved respectively in \cite{Brieskorn:GT} and \cite{dupont:PFAC}). Let us mention however that the \'etale cohomology method only yields formality if the arrangement is defined over a $p$-adic field. The results we get are thus less general than those of Brieskorn and Dupont. If instead of \'etale cohomology we use Hodge theory as in subsection \ref{subsection:Hodge} we can completely recover the results of \cite{Brieskorn:GT} and \cite{dupont:PFAC}.
\end{remark}

\subsection{Coformality of configuration spaces}

Recall that, by the work of Quillen, the homotopy type of simply connected rational spaces is captured by a differential graded Lie algebra. One says that a space is coformal if this Lie algebra is formal. By \cite[Corollary 1.2]{Saleh:NFICLF}, asking for a space $X$ to be coformal is equivalent to asking for the dg-algebra $C_*(\Omega X,\QQ)$ to be formal (by $\Omega X$ we mean a stricly associative model for the loop space of $X$). The advantage of this second definition is that it can be generalized to coefficient rings that are not $\QQ$-algebras.

We are interested in the coformality of the configuration space of $n$ distinct ordered points in $\mathbb{R}^d$ denoted $\mathrm{Conf}_n(\mathbb{R}^d)$. These spaces are known to be rationally coformal. We have the following theorem about coformality with $\ZZ_p$-coefficients.

\begin{theorem}
Let $d\geq 3$. Let $X=\mathrm{Conf}_n(\mathbb{R}^d)$. Let $p$ be a prime number. The dg-algebra $C_*(\Omega X,\ZZ_p)$ is $(p-2)(d-2)$-formal.
\end{theorem}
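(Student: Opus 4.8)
The plan is to realize $C_*(\Omega X,\ZZ_p)$ as the Betti realization of an algebraic object carrying a Frobenius action, and then to apply Theorem~\ref{theo: variant} with a suitable value of $c$. First I would use the coformality criterion cited above: coformality of $X=\mathrm{Conf}_n(\mathbb{R}^d)$ with $\ZZ_p$-coefficients is equivalent to formality of the dg-algebra $C_*(\Omega X,\ZZ_p)$, so it suffices to produce a Frobenius-type automorphism on a model of this chain algebra. Since $\mathrm{Conf}_n(\mathbb{R}^d)$ is homotopy equivalent to $\mathrm{Conf}_n(\mathbb{A}^d)$, and the latter is a smooth variety defined over $\ZZ$ (hence over $\QQ_p$), I would pass to \'etale homology of the (derived) loop space, or more precisely work with the cohomology of the configuration space and dualize. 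The action of a Frobenius lift $\sigma\in\mathrm{Gal}(\overline{\QQ_p}/\QQ_p)$ on $H^k_{\textit{\'et}}(\mathrm{Conf}_n(\mathbb{A}^d_{\overline{\QQ_p}}),\ZZ_p)$ is pure of weight $k$; by the classical computation of the cohomology of configuration spaces (Arnold, Cohen), this cohomology is torsion-free and concentrated in degrees divisible by $d-1$, with Frobenius acting on $H^{(d-1)j}$ by multiplication by $p^{(d-1)j/(d-1)}\cdot(\text{correction})$—more carefully, the generators live in cohomological degree $d-1$ and have Frobenius weight $d-1$, i.e. Frobenius acts by $p^{d-1}$ on each degree-$(d-1)$ generator, hence by $(p^{d-1})^j$ on a degree $j(d-1)$ class.

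The homology $H_*(\Omega X,\ZZ_p)$ is then, via a bar-type or Koszul-dual argument, concentrated in degrees divisible by $c:=d-2$ (the loop space shifts the $(d-1)$-grading down by one), and the induced automorphism $\hat\sigma$ on $C_*(\Omega X,\ZZ_p)$ acts on $H_{(d-2)n}$ by multiplication by $(p^{d-1})^n$—wait, I would need to track this carefully: the natural candidate is that $\hat\sigma$ acts on $H_{(d-2)n}$ by $\alpha^n$ with $\alpha=p^{d-1}$, which is exactly the hypothesis of Theorem~\ref{theo: variant}. Concretely, I would invoke the cited étale-Betti comparison to get a $\ZZ_p$-linear chain-level endomorphism $\hat\sigma$ of $C_*(\Omega X,\ZZ_p)$ inducing this degree twisting (using that $\Omega X$ admits an associative model whose chains carry the Galois action functorially, or arguing as Petersen does for the little disks operad).

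Once we are in the setting of Theorem~\ref{theo: variant} with $\alpha=p^{d-1}$ and $c=d-2$, the conclusion is $cn$-formality provided $\alpha^k-1=p^{(d-1)k}-1$ is a unit in $\ZZ_p$ for $k\le n$. But $p^{(d-1)k}-1$ is a unit in $\ZZ_p$ if and only if $p\nmid p^{(d-1)k}-1$, which always holds since $p^{(d-1)k}\equiv 0\pmod p$; so $\alpha^k-1$ is a unit for \emph{all} $k\ge 1$. That would give full formality, which is too strong and must be wrong—so the subtlety is that $\alpha$ is really a unit of $\ZZ_p$ only up to the issue that $\hat\sigma$ is an honest endomorphism, and in fact the correct reading is that the relevant twisting unit is not $p^{d-1}$ itself (which is not a unit!) but must be adjusted. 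I expect the main obstacle is precisely pinning down the correct unit $\alpha\in\ZZ_p^\times$: one does \emph{not} use $p^{d-1}$ but rather the action of an element of $\ZZ_p^\times$ obtained from a different lift, or one uses the variant with $\alpha$ a root of unity of order $p-1$ (a Teichmüller-type unit), for which $\alpha^k-1$ is a unit exactly when $(p-1)\nmid k$, i.e. for $k\le p-2$. Combined with $c=d-2$ this yields $(p-2)(d-2)$-formality, matching the statement. So the real content is: identify a Frobenius/torus-action automorphism whose homology-level action in degree $(d-2)n$ is multiplication by $\zeta^n$ for $\zeta$ a primitive $(p-1)$st root of unity in $\ZZ_p$ (equivalently, use the $\mathbb{G}_m$-action on $\mathrm{Conf}_n(\mathbb{A}^d)$ by scaling combined with reduction to the prime-to-$p$ part of Frobenius), verify $\zeta^k-1\in\ZZ_p^\times$ precisely for $1\le k\le p-2$, and then quote Theorem~\ref{theo: variant}. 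The homological bookkeeping relating the $(d-1)$-grading on configuration space cohomology to the $(d-2)$-grading on loop space homology, and checking that the chain-level automorphism exists integrally, are the remaining points to verify.
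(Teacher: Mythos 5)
Your endgame is the right one (apply Theorem~\ref{theo: variant} with $c=d-2$ and $\alpha\in\ZZ_p^\times$ a unit whose reduction generates $\FF_p^\times$, so that $\alpha^k-1$ is a unit exactly for $k\le p-2$), and your bookkeeping relating the degree-$(d-1)$ generators of $H^*(X)$ to the degree-$(d-2)$ generators of $H_*(\Omega X)$ matches the paper's (which uses Hurewicz twice plus the Cohen--Gitler computation that $H_*(\Omega X,\ZZ_p)$ is torsion-free, concentrated in degrees divisible by $d-2$, and generated in degree $d-2$). But there is a genuine gap at the crucial step: the construction of the chain-level automorphism $\hat\sigma$. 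You correctly diagnose that the \'etale/Frobenius route fails here --- Frobenius acts on $H^{(d-1)j}_{\textit{\'et}}$ by $q^{(d-1)j}$, which is a power of $p$ and hence \emph{not} a unit of $\ZZ_p$ (this is precisely why the arrangement results in the paper use $\ZZ_\ell$ with $\ell\ne p$). Your proposed repairs do not work either: the scaling $\mathbb{G}_m$-action on $\mathrm{Conf}_n(\mathbb{A}^d)$ is homotopically trivial ($\mathbb{C}^\times$ is connected, so each scaling is homotopic to the identity and induces the identity on homology), and ``the prime-to-$p$ part of Frobenius'' is not a well-defined chain-level endomorphism --- extracting it would essentially presuppose the splitting you are trying to prove.

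The paper's actual source of $\hat\sigma$ is entirely different: it first replaces $X$ by its $p$-completion and then invokes the action of the $p$-complete Grothendieck--Teichm\"uller group $\mathrm{GT}_p$ on (the $p$-completion of) $\mathrm{Conf}_n(\mathbb{R}^d)$ constructed by Boavida--Horel. That action produces, for \emph{every} unit $\alpha\in\ZZ_p^\times$, a space-level automorphism $\alpha^\sharp$ acting by multiplication by $\alpha$ on $H_{d-1}(X,\ZZ_p)$; looping it gives a genuine dg-algebra automorphism of $C_*(\Omega X,\ZZ_p)$, and multiplicativity (via the Cohen--Gitler generation statement) forces it to act by $\alpha^k$ in degree $k(d-2)$. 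This existence statement is the real content of the proof and is exactly what your proposal leaves unsupplied; without it, the argument does not close.
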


\begin{proof}
We can first replace $X$ by its $p$-completion which we will do implicitly from now on. In \cite{BoavidaHorel:FLDOPC}, an action of $\mathrm{GT}_p$, the $p$-complete Grothendieck--Teichm\"uller group is constructed on $X$. As a consequence of this action, it is shown in \cite[Proposition 8.2]{BoavidaHorel:FLDOPC} that for any unit $\alpha$ in $\ZZ_p$, there exists an automorphism $\alpha^{\sharp}$ of $X$ that acts by multiplication by $\alpha$ in homological degree $(d-1)$. Since this is the bottom non-vanishing homology group of $X$, using the Hurewicz isomorphism twice, we can identify this group with $H_{d-2}(\Omega X,\ZZ_p)$. By \cite[Theorem 2.3]{CohenGitler:LSCS}, we know that the homology of $\Omega X$ is torsion-free, concentrated in degree divisible by $(d-2)$ and is generated as an algebra by classes of degree $(d-2)$. Since the action of $\alpha^\sharp$ exists at the space level, it is compatible with the dg-algebra structure on $C_*(\Omega X,\ZZ_p)$ and we deduce that $\alpha^\sharp$ acts as multiplication by $\alpha^k$ in homological degree $k(d-2)$. We can pick an $\alpha$ whose residue modulo $p$ is a generator of the group of units of $\FF_p$. For such an $\alpha$, the number $\alpha^k-1$ is a unit in $\ZZ_p$ for $k\leq p-2$. We are thus precisely in the situation of Theorem \ref{theo: variant} with $P$ the associative operad and $R=\ZZ_p$.
\end{proof}
\section*{Acknowledgments}
We would like to thank Bruno Vallette and the anonymous referee for helpful comments.

\bibliography{references-2019}
\bibliographystyle{amsalpha}

\end{document}